\DeclareMathOperator{\Tr}{Tr}
\DeclareMathOperator{\rank}{rank}
\DeclareMathOperator{\Det}{Det}
\newcommand{\abs}[1]{\lvert#1\rvert}
\newcommand{\norm}[1]{\lVert#1\rVert}
\newcommand{\Norm}[1]{\left\lVert#1\right\rVert}
\newcommand{\bbT}{{\mathbb T}}
\newcommand{\bbR}{{\mathbb R}}
\newcommand{\bbC}{{\mathbb C}}
\newcommand{\calD}{{\mathcal D}}
\newcommand{\calH}{{\mathcal H}}
\newcommand{\calK}{{\mathcal K}}
\newcommand{\calL}{\mathcal{L}}
\newcommand{\calB}{\mathcal{B}}
\newcommand{\Sch}{\mathbf{S}}
\numberwithin{equation}{section}
\theoremstyle{plain}
\newtheorem{theorem}{\bf Theorem}[section]
\newtheorem*{theorem*}{Theorem}
\newtheorem{lemma}[theorem]{\bf Lemma}
\theoremstyle{definition}
\theoremstyle{remark}
\newtheorem*{remark*}{\bf Remark}
\newtheorem{remark}[theorem]{\bf Remark}
\newcommand{\eps}{\varepsilon}
\newcommand{\ac}{\text{\rm (ac)}}
\newcommand{\sing}{\text{\rm (sing)}}
\begin{document}

\title[Perturbation theory and two weights problem]{Spectral perturbation theory and the two weights problem}

\author{Alexander Pushnitski}
\address{Department of Mathematics, King's College London, Strand, London, WC2R~2LS, U.K.}
\email{alexander.pushnitski@kcl.ac.uk}

\author{Alexander Volberg}
\address{Department of Mathematics, Michigan State University, 
East Lansing, MI 48824, U.S.A.}
\email{volberg@math.msu.edu}

\subjclass[2010]{47G10, 47A40}

\keywords{Two weights problem, scattering theory, operator valued weights}

\begin{abstract}
The famous two weights problem consists in characterising 
all possible pairs of weights such that the Hardy projection 
is bounded between the corresponding weighted $L^2$ spaces. 
Koosis' theorem of 1980 gives a way to construct a certain 
class of pairs of weights. We show that Koosis' theorem
is closely related to (in fact, is a direct consequence of) 
a spectral perturbation model suggested by de Branges in 1962. 
Further, we show that de Branges' model provides an operator-valued 
version of Koosis' theorem.
\end{abstract}

\maketitle

\section{Introduction and main result}\label{sec.a}

\subsection{Introduction}\label{sec.a1}

Let $P_\pm$ be the Hardy projections in $L^2(\bbT)$ ($\bbT$ is the unit circle parameterised by $(0,2\pi)$):
\begin{equation}
(P_\pm f)(e^{i\theta})
=
\pm
\lim_{r\to1\mp0}\int_0^{2\pi}\frac{f(e^{it})}{1-re^{i(\theta-t)}}\frac{dt}{2\pi}.
\label{a1}
\end{equation}
In its simplest form, 
the two weights problem consists in the characterisation of all 
pairs of weights $v_j:\bbT\to [0,\infty)$, $j=0,1$, such that
\begin{equation}
P_+: L^2(\bbT,v_0(e^{it})dt) \mapsto L^2(\bbT, v_1(e^{it})dt)
\label{a2}
\end{equation}
is a bounded operator. (Of course, one could equally speak of $P_-$).  
If $v_1=v_0$, then the characterisation of such weights is given by 
the celebrated Muckenhoupt condition \cite{Muck}:
$$
\sup_\Delta 
\left(
\frac1{\abs{\Delta}}\int_\Delta v_0(e^{it})dt 
\cdot
\frac1{\abs{\Delta}}\int_\Delta v_0(e^{it})^{-1}dt 
\right)<\infty,
$$
where the supremum is taken over the set of all intervals $\Delta\subset(0,2\pi)$ 
and $\abs{\Delta}$ is 
the length of the interval $\Delta$.
If there is no \emph{a priori} relation between $v_0$ and $v_1$, the 
two weights problem is open, despite many years of efforts.
Some necessary and some sufficient conditions are known but 
no effective complete description of all pairs of weights $v_0$, $v_1$ was available available till recently.
The recent news at the time of writing is that the conjunction of three preprints \cite{NTV}, \cite{LSSUT}, \cite{L} proved a 
long-standing conjecture of Nazarov--Treil--Volberg (see \cite{Vo}), stating that for the Hilbert transform the
so-called two-weight $T1$ theorem is valid. However,  the conditions of $T1$ theorem are not easily 
translated (if at all) into conditions on weights.

Under these circumstances, any partial information on the problem is valuable. 
One such piece of information is Koosis' theorem \cite{Koosis1}:
\begin{theorem*}[Koosis]
For every weight $v_0\geq0$ such that
$0<v_0(e^{it})<1$ for a.e. $t\in(0,2\pi)$ and  $v_0^{-1}\in L^1(\bbT)$, 
one can find another weight $v_1$,  $0\leq v_1\leq v_0$, such that 
$\log v_1\in L^1(\bbT)$ and such that the Hardy projection $P_+$ is bounded
between the weighted spaces \eqref{a2}.
\end{theorem*}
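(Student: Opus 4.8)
The plan is to reformulate Koosis' theorem as a spectral perturbation problem. Since $0<v_0<1$, write $v_0=e^{-2w}$ for some real $w\geq 0$ with $e^{2w}=v_0^{-1}\in L^1(\bbT)$, so $w\in L^1(\bbT)$ but $w$ may be unbounded. We seek $v_1=e^{-2u}$ with $u\geq w$ (this gives $v_1\leq v_0$) and $u\in L^1(\bbT)$ (this gives $\log v_1\in L^1$). Boundedness of $P_+$ between the weighted spaces is equivalent to boundedness of the operator $e^{-u}P_+e^{w}$ on the unweighted $L^2(\bbT)$. The key idea is to engineer $u$ so that this operator becomes, up to a bounded factor, a genuine scattering-type object associated with a perturbation determined by the ``defect'' $u-w\geq 0$.

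First I would pass to the conjugate-function picture. Let $\widetilde{w}$ denote the harmonic conjugate of $w$ and set $h=e^{-(w+i\widetilde{w})}$, an outer function with $|h|=e^{-w}=\sqrt{v_0}$. Multiplication by $h/\bar h$ is a unitary operator on $L^2(\bbT)$, and the relevant question about $P_+$ unweighted-to-weighted translates, via the standard outer-function trick, into a question about how the Hardy subspace $H^2$ is moved by this unimodular multiplier — precisely the setting of de Branges' 1962 perturbation model, where one studies the operator obtained by compressing a unitary multiplier to $H^2$ and its relation to a contractive analytic function. Concretely, the bounded operator we must control has the form of (a constant times) the difference of two orthogonal projections, or a wave-operator-like intertwiner, built from the inner-outer data of $h$ and of the sought function with modulus $\sqrt{v_1}$.

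The main step is the construction itself: invoking de Branges' model as recalled in the body of the paper, I would choose the perturbation so that the associated analytic function is a \emph{contraction} of the Hardy class — the model guarantees that a purely contractive characteristic function yields a bounded intertwining operator. The freedom in de Branges' construction is exactly the freedom to add a nonnegative correction $u-w$ to the logarithmic weight while keeping it integrable; the integrability $u\in L^1$ is what makes $e^{-(u+i\widetilde u)}$ a well-defined outer function with $\log v_1\in L^1$, and the nonnegativity $u\geq w$ is what the model delivers for the boundedness. Thus one sets $v_1=|{\text{outer function from de Branges' model}}|^2$ and reads off all three required properties: $0\leq v_1\leq v_0$, $\log v_1\in L^1(\bbT)$, and the boundedness of $P_+$ between the weighted spaces.

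The hard part will be the translation dictionary: showing that the scattering/intertwining operator furnished by de Branges' abstract model coincides (up to bounded, boundedly invertible factors) with the concrete operator $e^{-u}P_+e^{w}$, and checking that the constraint ``$v_0<1$ a.e.\ and $v_0^{-1}\in L^1$'' is precisely what is needed to place the data inside the domain of applicability of the model (finiteness of the relevant entropy/defect integral). Once that identification is in place, Koosis' theorem follows immediately as a corollary, and the operator-valued generalisation is obtained by running the same argument with de Branges' model in its operator-valued form, the scalar outer functions being replaced by operator-valued outer functions and $L^1$ conditions by their trace-norm analogues.
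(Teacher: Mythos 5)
Your opening reduction is fine: writing $v_0=e^{-2w}$, seeking $v_1=e^{-2u}$ with $u\geq w$, $u\in L^1$, and observing that the problem is the boundedness of $e^{-u}P_+e^{w}$ on unweighted $L^2$ is a correct (and classical) reformulation. But from that point on the proposal contains no proof, and the gap is not a deferred technicality --- it is the entire content of the theorem. You never construct $v_1$. The phrases ``invoking de Branges' model\dots I would choose the perturbation so that the associated analytic function is a contraction'' and ``the nonnegativity $u\geq w$ is what the model delivers for the boundedness'' assert exactly what has to be proved. Moreover, you appear to have misidentified which ``de Branges model'' is relevant: the 1962 paper \cite{deB} used here is not about characteristic functions of contractions, compressions of unimodular multipliers to $H^2$, or de Branges--Rovnyak/Sz.-Nagy--Foias model spaces. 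It is a perturbation-theoretic scheme for a \emph{pair} of unitary (or self-adjoint) operators $U_0,U_1$ coupled through a contraction $G$, with the algebraic identities \eqref{a15}--\eqref{a16} for the Cauchy transforms $\psi_j$ of the measures $\nu_j(\delta)=GE_{U_j}(\delta)G^*$, and with intertwining operators between the weighted spaces $L^2(\nu_0)$ and $L^2(\nu_1^{\text{\rm (ac)}})$. Your outer-function picture never produces the pair $(U_0,U_1)$, the coupling $G$, or the second measure $\nu_1$ whose a.c.\ density \emph{is} the sought weight; so there is nothing from which to ``read off'' $v_1$.

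Concretely, the steps you are missing are: (i) the reduction to the weighted projections $P_\pm^{(w_0)}$ with $w_0=v_0^{-1}$, which is where the hypothesis $v_0^{-1}\in L^1$ enters (it makes $\nu_0=w_0\,dt/2\pi$ a finite measure, normalised as in \eqref{a6}) --- not as an ``entropy/defect integral''; (ii) the construction $U_1=e^{i\Theta/2}U_0e^{i\Theta/2}$ with $\sin(\Theta/2)=G^*G$ and the verification of \eqref{a15}--\eqref{a16} (Lemma~\ref{lma.b1}); (iii) the definition of $w_1$ as the a.c.\ density of $\nu_1$ via Poisson averages, which requires proving that this density exists (Lemma~\ref{lma.c2}) and that the boundary values $D_0^\pm$ exist and are invertible a.e.\ (Lemma~\ref{lma.c1}, via Privalov-type arguments); (iv) the identity $P_\pm^{(w_0)}=\pm\tfrac{i}{2}(X-Y_\pm)$ of \eqref{a7a}, which is what actually yields boundedness; and (v) the inequality $v_1\leq v_0$, which is \emph{derived} from \eqref{a7a} (via $P_+^{(w_0)}+P_-^{(w_0)}=$ multiplication by $w_0$, giving \eqref{a9}), not supplied by the model as an input. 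Finally, $\log v_1\in L^1$ requires the quantitative non-degeneracy $w_1=w_0/|D_0^+|^2$ together with integrability of $\log^+|D_0^+|$; your proposal gives $u\in L^1$ only by fiat. As written, the argument is a plausible-sounding paraphrase of the paper's announced strategy with the load-bearing construction replaced by an appeal to a model that does not do what you say it does.
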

 
Koosis' proof (see also \cite[Appendix]{NVY}) is an ingenious 
calculation, but one can argue that it has a rather \emph{ad hoc}
flavour. 
The purpose of this note is to point out that Koosis' theorem 
follows \emph{naturally} from the formalism of spectral perturbation theory
(more precisely, scattering theory)
in the form suggested by de Branges in \cite{deB}.
In fact, the statement we get in this way is more general than the 
original Koosis' theorem; we obtain an \emph{operator-valued}
analogue. 
That is, our $L^2$ spaces consist of functions on $\bbT$ with values in a Hilbert space $\calK$
and our weights are functions with values in the Schatten classes of compact 
operators in $\calK$. 

We hope that this note will attract the attention of experts to the connection 
between the two weights problem and scattering theory. We believe
that this connection is yet to be thoroughly explored.

\subsection{Preliminaries}\label{sec.a1a}
First we would like to rewrite the two weights problem in an equivalent form.
Let $f\in L^2(\bbT,v_0(e^{it})dt)$ 
and 
suppose that the weight $v_0$ vanishes
on some open set. Then the function $f$ is not defined on this open 
set, and therefore it is not clear how to define the projections $P_\pm f$ by 
\eqref{a1}. This suggests that the integration in the definition \eqref{a1} of the 
projections $P_\pm$ should be performed with respect to the weighted
measure $v_0(e^{it})dt$. 
Thus, for a weight $w_0:\bbT\to[0,\infty)$, we define the 
\emph{weighted Hardy projections $P_\pm^{(w_0)}$} by 
\begin{equation}
(P_\pm^{(w_0)}f)(e^{i\theta})
=
\pm
\lim_{r\to1\mp0}\int_0^{2\pi}\frac{w_0(e^{it})f(e^{it})}{1-re^{i(\theta-t)}}\frac{dt}{2\pi};
\label{a3}
\end{equation}
the existence of the limits will be discussed separately. 

If $v_0(e^{it})>0$ for a.e.\  $t$, then a simple argument 
with replacing $f$ by $v_0f$ 
shows that 
$P_+$  is a bounded operator  between the spaces \eqref{a2}
if and only if 
\begin{equation}
P_+^{(w_0)}: L^2(\bbT,w_0(e^{it})dt)\to L^2(\bbR,w_1(e^{it})dt)
\label{a4}
\end{equation}
is bounded, where $w_1=v_1$ and $w_0=v_0^{-1}$.
Thus, we obtain 
\begin{theorem*}[Koosis, version 2]
For every weight $w_0\geq0$ such that
$w_0(e^{it})>0$ for a.e. $t\in(0,2\pi)$ and
$w_0\in L^1(\bbT)$, 
one can find another weight $w_1\geq0$ with $w_1w_0\leq1$ 
and $\log w_1\in L^1(\bbT)$ such that the weighted Hardy projection $P_+^{(w_0)}$ 
is a bounded operator between the spaces \eqref{a4}.
\end{theorem*}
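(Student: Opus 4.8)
The plan is to \emph{produce} the weight $w_1$ explicitly from the Herglotz--Cauchy transform of $w_0$ and then deduce the required boundedness from de Branges' perturbation model. Let
$$
G_0(z)=\int_0^{2\pi}\frac{e^{it}+z}{e^{it}-z}\,w_0(e^{it})\,\frac{dt}{2\pi},\qquad z\in\bbD;
$$
since $\Re G_0>0$ in $\bbD$, both $G_0$ and $1/G_0$ are of Smirnov class, so $G_0$ is outer, its nontangential boundary values $G_0^*$ exist and are finite a.e., and $\Re G_0^*=w_0$ a.e. Put
$$
w_1:=\Re\!\left(\frac1{G_0}\right)^{\!*}=\frac{w_0}{\abs{G_0^*}^2}\quad\text{a.e. on }\bbT.
$$
Two of the three requirements are then automatic: $\abs{G_0^*}^2=(\Re G_0^*)^2+(\Im G_0^*)^2\ge w_0^2$ gives $w_0w_1=w_0^2/\abs{G_0^*}^2\le 1$; and $\log w_1=\log w_0-2\log\abs{G_0^*}$ lies in $L^1(\bbT)$ because $\log\abs{G_0^*}\in L^1$ ($G_0$ being outer) and $\log w_0\in L^1$ (which is part of the data --- it holds whenever the reduction from the first version applies, there $w_0=v_0^{-1}>1$). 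It remains to show that $P_+^{(w_0)}\colon L^2(\bbT,w_0\,dt)\to L^2(\bbT,w_1\,dt)$ is bounded.

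Here de Branges' model does the work. Pass, via a Cayley transform, to the line: $w_0\,dt$ becomes a finite absolutely continuous measure $\mu_0$ on $\bbR$ whose Nevanlinna function $m_0$ is a M\"obius image of $G_0$, and $A_0:=M_x$ on $L^2(\bbR,\mu_0)$ is a cyclic self-adjoint operator with simple, purely absolutely continuous spectrum of density $\propto w_0$. By the Aronszajn--Donoghue formula, $w_1$ is precisely the absolutely continuous spectral density of the rank-one perturbation $A_1:=A_0+\kappa\langle\,\cdot\,,\mathbf{1}\rangle\mathbf{1}$ for a suitable coupling $\kappa$ (on the circle this is the Aleksandrov--Clark perturbation with $m$-function $1/G_0$). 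Since $A_1-A_0$ is rank one, and hence trace class, de Branges' theorem furnishes the existence and completeness of the wave operators $W_\pm=W_\pm(A_1,A_0)=\slim_{t\to\pm\infty}e^{itA_1}e^{-itA_0}$; in particular $W_+$ is a partial isometry with initial space $\calH_0^{\ac}=L^2(\bbR,\mu_0)$ (all of it, $A_0$ being a.c.), so $W_+$ is bounded. Writing out the \emph{stationary} formula for $W_+$ of a rank-one perturbation, one sees that its only non-elementary ingredient is the boundary operator $g\mapsto\bigl(\int\frac{w_0(e^{it})g(e^{it})}{1-ze^{-it}}\,\frac{dt}{2\pi}\bigr)\big|_{z\to e^{i\theta}}$ --- that is, the boundary values of the analytic extension of $P_+^{(w_0)}g$. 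Unwinding this identifies $P_+^{(w_0)}$ --- modulo the spectral representations of $A_0$ and of $A_1^{\ac}$, a fixed unimodular multiplier, and a rank-one correction (the term $\langle g,\mathbf{1}\rangle_{L^2(w_0\,dt)}$, harmless since $\mathbf{1}\in L^2(w_1\,dt)$) --- with $W_+$; boundedness of $P_+^{(w_0)}$ follows.

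The main obstacle is exactly this identification: turning the soft assertion ``the wave operators exist and are complete'' into the concrete two-weight bound requires the explicit stationary representation of $W_+$ and some care --- existence of nontangential limits, the principal-value reading of $1/G_0^*$ on $\bbT$, and the fact that $P_+^{(w_0)}$ is \emph{not} an isometry $L^2(w_0\,dt)\to L^2(w_1\,dt)$ but only a bounded ``piece'' of the unitary $W_+$, the defect being accounted for by any eigenvalue of $A_1$ (i.e. by the singular part of the measure with $m$-function $1/G_0$). Everything else is routine. Finally, the operator-valued refinement goes through verbatim once scalars are replaced by operators on $\calK$: $G_0$ becomes an operator-valued Herglotz function, the rank-one perturbation a trace-class (Schatten-class) one, and de Branges' trace-class completeness theorem still applies, so that $w_1$ takes values in the corresponding Schatten class.
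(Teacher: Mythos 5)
Your overall strategy is the paper's own, and your weight is the paper's weight: in the scalar, normalised case $\int_0^{2\pi}w_0\,dt/2\pi=1$ one has $\alpha=0$, $D_0=iG_0$, $D_1=i/G_0$, and the a.c.\ density of the perturbed measure $\nu_1$ is exactly $w_1=\Re(1/G_0)^*=w_0/\abs{G_0^*}^2$. The soft verifications are correct: $w_0w_1\le1$ from $\abs{G_0^*}^2\ge(\Re G_0^*)^2$ (this is \eqref{a9}); outerness of $G_0$ giving $\log\abs{G_0^*}\in L^1$; and $\log w_1=\log w_0-2\log\abs{G_0^*}\in L^1$ \emph{granted} $\log w_0\in L^1$. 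Your caveat on that last point is fair --- the hypotheses of ``version 2'' as literally stated do not imply $\log w_0\in L^1$, the paper's own non-degeneracy discussion (Section~\ref{sec.a2}, item 5) is subject to the same proviso, and for weights coming from Koosis' original statement one has $w_0=v_0^{-1}>1$, so the issue evaporates there.

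The gap is in the step that carries all the weight: deducing the two-weight bound from the perturbation model. Abstract existence and completeness of $W_\pm(A_1,A_0)$ for a trace-class perturbation gives a unitary between two spectral representations but says nothing, by itself, about the boundary-value operator $P_+^{(w_0)}$; and the shape you assign to the identification is wrong. $P_+^{(w_0)}$ is \emph{not} ``$W_+$ modulo spectral representations, a unimodular multiplier and a rank-one correction''. The correct stationary identity --- which is the actual content of the paper, formula \eqref{a7a} proved in Section~\ref{sec.d} --- is $P_\pm^{(w_0)}=\pm\tfrac{i}{2}(X-Y_\pm)$, where $Y_\pm$ is multiplication by the boundary value $D_0^\pm(\mu)$ (the object unitarily equivalent to the wave operator, see \eqref{a26}) and $X$ is the \emph{other} de Branges operator \eqref{a24}, sending $(\mu-z_i)^{-1}\chi_i$ to $(\mu-z_i)^{-1}D_0(z_i)\chi_i$. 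Each of $X$, $Y_\pm$ is separately a contraction into $L^2(w_1\,dt)$ --- this is Lemmas~\ref{lma.b2} and \ref{lma.c3}, resting on the resolvent identity \eqref{b9} and on the a.e.\ existence and invertibility of the boundary values $D_0^\pm$ (Lemma~\ref{lma.c1}) --- and the Hardy projection is their \emph{difference}, typically nowhere near a unitary and not a finite-rank perturbation of one. Without these two isometry statements and the algebraic identity tying them to $P_+^{(w_0)}$, ``unwinding the stationary formula'' does not yield the boundedness you need; your own closing remark correctly locates this as the main obstacle, but the proof of the theorem \emph{is} the removal of that obstacle. A final caution: the claim that the operator-valued case goes through ``verbatim'' conceals exactly where the paper has to work (UMD-based existence of $\Sch_p$-norm boundary values of $\psi_0$, the operator-valued Radon--Nikodym statement for $\nu_1^{\ac}$ in Lemma~\ref{lma.c2}, and invertibility of $D_0^\pm$ via regularised determinants), though none of that affects the scalar statement under review.
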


It is this second version of Koosis' theorem that we will discuss in this paper.

\subsection{Operator valued functions}\label{sec.a1b}

Let $\calK$ be a Hilbert space; the case $\dim \calK<\infty$ is not excluded,
neither it is trivial. We denote by $(\cdot,\cdot)$ the inner product in $\calK$
and by $\norm{\cdot}$ the norm in $\calK$. 
Notation $\calB(\calK)$ stands for the set of all bounded linear operators
on $\calK$ and $\Sch_p$, $1\leq p<\infty$, denotes the Schatten class of 
compact operators in $\calK$; in particular, $\Sch_1$ is the trace class. 
We denote by $\norm{\cdot}_p$ the norm in $\Sch_p$
and by $\norm{\cdot}_\calB$ the norm in $\calB(\calK)$. 
As usual, for $w\in\calB(\calK)$, notation $w\geq0$ means that $(w\chi,\chi)\geq0$
for all elements $\chi\in\calK$, and in the same way $w\leq C$, where $C$ is 
a constant, means $(w\chi,\chi)\leq C\norm{\chi}^2$ for all $\chi\in\calK$. 
For any $w\in\calB(\calK)$ such that $w\geq0$, the square root $w^{1/2}$ 
is defined via the functional calculus for self-adjoint operators.

Below we work with ``nice'' $\calK$-valued functions of the 
form
\begin{equation}
f(\mu)=\sum_i (\mu-z_i)^{-1} \chi_i,
\quad
\mu\in\bbT,
\quad
\chi_i\in\calK, 
\quad 
\abs{z_i}\not=1,
\label{a5}
\end{equation}
where the sum has finitely many terms. 
We will denote by  $\calL$ the set of all such ``nice'' functions $f$.

Let $w:\bbT\to\calB(\calK)$ be a Borel measurable function. 
Suppose that $w$ is non-negative i.e. $w(e^{it})\geq0$ for a.e.\  $t\in(0,2\pi)$,
and that $w$ satisfies
\begin{equation}
\int_0^{2\pi}(w(e^{it})\chi,\chi)\frac{dt}{2\pi} \leq C\norm{\chi}^2
\label{a4a}
\end{equation}
for some constant $C$ and all $\chi\in\calK$. 
Then for any $f\in\calL$ we can define the quasi-norm
\begin{equation}
\norm{f}_{L^2(w)}^2
=
\int_0^{2\pi} (w(e^{it})f(e^{it}),f(e^{it}))\frac{dt}{2\pi}.
\label{a4b}
\end{equation}
After taking the quotient over the subspace of functions $f$ with 
$\norm{f}_{L^2(w)}=0$, we obtain a norm on the quotient space;
the space obtained by taking the closure is, by definition, the 
weighted space $L^2(w)$. Thus, by construction, $\calL$ 
is dense in $L^2(w)$.

\subsection{Main result and discussion}\label{sec.a2}

Let $1\leq p<\infty$, and let $w_0:\bbT\to\Sch_p$ be a Borel measurable
function. 
We assume that $w_0$ is non-negative
and  satisfies
$$
\int_0^{2\pi} \norm{w_0(e^{it})}_p \frac{dt}{2\pi}<\infty;
$$
this, of course, implies \eqref{a4a}.
For convenience, we will assume that $w_0$ is normalised so that the above 
integral equals one:
\begin{equation}
\int_0^{2\pi} \norm{w_0(e^{it})}_p \frac{dt}{2\pi}=1.
\label{a6}
\end{equation}
For such weight $w_0$ and for $f\in\calL$, we define the
weighted Hardy projections $P_\pm^{(w_0)}$, as in the scalar case, by \eqref{a3}.
It is clear that for every $r\not=1$, the integrals in \eqref{a3} converge
absolutely in the norm of $\calK$.

\begin{theorem}\label{th.a1}
Let $1\leq p<\infty$, and let $w_0:\bbT\to\Sch_p$ 
be a Borel measurable non-negative ($w_0\geq0$ a.e.\!)  
weight function which satisfies \eqref{a6}.
Then for all $f\in\calL$ (i.e. for all $f$ of the form \eqref{a5}) 
and for a.e.\  $\theta\in(0,2\pi)$, the limits in \eqref{a3}
exist in the norm of $\calK$. 
Further, there exists a non-trivial Borel measurable non-negative 
weight function $w_1:\bbT\to\calB(\calK)$, which satisfies
$$
\int_0^{2\pi}(w_1(e^{it})\chi,\chi)\frac{dt}{2\pi} \leq \norm{\chi}^2, 
\quad \forall \chi\in\calK,
$$
and there exist contractions (i.e. operators of norm $\leq1$) 
$X$, $Y_+$, $Y_-$, 
acting from $L^2(w_0)$ to $L^2(w_1)$, 
such that the weighted 
Hardy projections $P_\pm^{(w_0)}$ can be represented as
\begin{equation}
P_\pm^{(w_0)}=\pm\frac{i}{2}(X-Y_\pm).
\label{a7a}
\end{equation}
In particular, 
$$
P_\pm^{(w_0)}: 
L^2(w_0)\to L^2(w_1)
$$
are contractions. 
\end{theorem}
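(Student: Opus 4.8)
The plan is to reduce Theorem~\ref{th.a1} to de Branges' spectral model, which is the natural source of the contractions $X$, $Y_\pm$.

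First I would recall the model. Given a non-negative operator-valued measure (here $w_0(e^{it})\,dt/2\pi$ on $\bbT$, of finite trace-norm mass by \eqref{a6}), consider the reproducing-kernel-type space generated by the functions \eqref{a5}: evaluate the quasi-norm \eqref{a4b} on $f(\mu)=\sum_i(\mu-z_i)^{-1}\chi_i$ and expand it. The key computation is that the Cauchy kernel $\int_0^{2\pi}(\mu-e^{it})^{-1}\overline{(\nu-e^{it})^{-1}}\,w_0(e^{it})\,dt/2\pi$ splits, via the partial-fraction identity $(\mu-e^{it})^{-1}\overline{(\nu-e^{it})^{-1}}=(\bar\nu-\mu)^{-1}\big((\mu-e^{it})^{-1}-\overline{(\nu-e^{it})^{-1}}\big)$ when $\mu,\nu$ are outside $\bbT$, into boundary-value pieces. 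This is exactly the mechanism by which $P_\pm^{(w_0)}$ acts on $\calL$ as difference operators. Concretely, one writes, for $f\in\calL$, the identity
\begin{equation}
f(e^{i\theta}) = (P_+^{(w_0')}f)(e^{i\theta}) - (P_-^{(w_0')}f)(e^{i\theta})
\label{plan1}
\end{equation}
on the support of $w_0$, which together with the definition \eqref{a3} lets one express $P_\pm^{(w_0)}$ through a single ``spectral'' operator and a ``shift''. The existence of the boundary limits in \eqref{a3} for $f\in\calL$ is elementary since each summand $(\mu-z_i)^{-1}$ is, up to an $L^2$-bounded Cauchy integral, a rational function with no pole on $\bbT$; the weight only enters through the bounded pairing \eqref{a4a}.

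Next I would build $w_1$. De Branges' construction produces, from the given "input" weight $w_0$ and the natural contractive structure on the model space, a companion "output" weight $w_1$ on $\bbT$ with operator values, automatically satisfying the mass bound $\int_0^{2\pi}(w_1\chi,\chi)\,dt/2\pi\le\norm{\chi}^2$; in the classical de Branges setup $w_1$ is essentially the spectral density of the "other side" of a $2\times2$ characteristic function (a Schur-class / de Branges $J$-inner function), and the condition $\log\det w_1\in L^1$ reflects the entropy inequality for such functions. The contractions $X$, $Y_\pm:L^2(w_0)\to L^2(w_1)$ are then \emph{defined} on the dense set $\calL$ by the model formulas — $X$ being the "scattering-type" isometric-up-to-defect map and $Y_\pm$ incorporating the inner shift on the $\pm$ Hardy side — and the representation \eqref{a7a} is a bookkeeping identity once \eqref{plan1} and the definition \eqref{a3} are written in model coordinates. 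The contractivity of $X$, $Y_\pm$ is the assertion that the model map does not increase the $L^2(w_1)$ norm; granting that, $P_\pm^{(w_0)}=\pm\frac{i}{2}(X-Y_\pm)$ is a sum of two contractions scaled by $\tfrac12$, hence a contraction, giving the last sentence for free.

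The main obstacle, I expect, is verifying that the operators defined by the model formulas on $\calL$ are genuinely \emph{contractive} (and hence extend to $L^2(w_0)$), rather than merely bounded — this is the heart of de Branges' theorem and is where the special algebraic structure of the Cauchy kernel, the choice of $w_1$ as the complementary spectral density, and the positivity of the relevant Gram/reproducing kernels all have to be used simultaneously. A secondary technical point is ensuring $w_1$ is non-trivial and that $\log\det w_1\in L^1(\bbT)$; this I would extract from the Szegő-type / entropy bound attached to the $J$-inner function in the model, using the finite-mass normalisation \eqref{a6} to keep everything integrable. Once the model is set up correctly, the translation into the language of \eqref{a7a} is essentially formal, and the operator-valued case runs in parallel with the scalar one because all estimates are stated via the pairings $(w_j\,\cdot,\cdot)$ and the trace-norm mass of $w_0$.
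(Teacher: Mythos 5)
Your proposal correctly identifies the framework the paper uses (de Branges' perturbation model), but as written it is a plan that defers every substantive step to ``the model'' rather than a proof, and it contains one concretely false claim. You assert that the existence of the limits in \eqref{a3} for $f\in\calL$ is ``elementary'' because each summand $(\mu-z_i)^{-1}$ is rational with no pole on $\bbT$. This misses where the difficulty lies: the integral in \eqref{a3} is essentially the Cauchy transform of $w_0 f$, and $w_0$ is only an integrable $\Sch_p$-valued function. The a.e.\ existence of boundary values of Cauchy transforms of Banach-space-valued $L^1$ functions is not automatic; the paper needs the UMD property of $\Sch_p$ (for $1<p<\infty$), a weak $(1,1)$ bound for the non-tangential maximal function, and a Privalov-type domain construction (Lemma~\ref{lma.c1}) to get the boundary values of $D_0(z)=\alpha+\psi_0(z)$, and then an extra regularised-determinant argument to invert them and obtain $D_1^\pm$. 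The existence of the limits in \eqref{a3} is then \emph{deduced} from this via the identity \eqref{d3}, not established independently.

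The other gaps: (i) you never construct $w_1$ or the operators $X$, $Y_\pm$; you explicitly flag their contractivity as ``the main obstacle'' and leave it unresolved, whereas this is exactly the content to be supplied. The paper does it by realising $\nu_0(\delta)=GE_{U_0}(\delta)G^*$ with an explicit contraction $G$, setting $U_1=e^{i\Theta/2}U_0e^{i\Theta/2}$ with $G^*G=\sin(\Theta/2)$, proving the algebraic identities \eqref{a15}--\eqref{a16}, and deriving \eqref{b9}, which shows that $X$ (acting on coefficients by $D_0(z_i)$) and $Y_\pm$ (multiplication by $D_0^\pm$) are in fact \emph{unitary} onto $L^2(\nu_1)$ and $L^2(\nu_1^\ac)$; contractivity into $L^2(w_1)$ then comes from the embedding \eqref{a20}. (ii) The identification $d\nu_1^\ac=w_1\,d\theta/2\pi$ is itself nontrivial in the operator-valued setting (Radon--Nikodym fails for general operator-valued measures) and needs the separate argument of Lemma~\ref{lma.c2}. (iii) Your identity \eqref{plan1} is garbled: the correct Plemelj-type relation is $P_+^{(w_0)}f+P_-^{(w_0)}f=w_0f$ (a \emph{sum}, equal to $w_0f$, not $f$), and the representation \eqref{a7a} does not follow by bookkeeping from it; it requires the explicit computation of Section~\ref{sec.d} expressing $(Xf)(e^{i\theta})$ as a limit involving $D_0(re^{i\theta})f(e^{i\theta})$ and the weighted Cauchy integral. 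Finally, the theorem does not claim $\log\det w_1\in L^1$; chasing a Szeg\H{o}/entropy bound is not needed (and not what the paper's non-degeneracy statement says).
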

Let us discuss this result. 

1.
It is easy to see that the sum $P_+^{(w_0)}+P_-^{(w_0)}$ is simply the operator 
of multiplication by $w_0$:
$$
(P_+^{(w_0)}f)(e^{i\theta})+(P_-^{(w_0)}f)(e^{i\theta})
=
w_0(e^{i\theta})f(e^{i\theta}).
$$
By \eqref{a7a}, it follows that this operator of multiplication has norm $\leq1$.
From this it follows that 
\begin{equation}
w_0(e^{i\theta})^{1/2}w_1(e^{i\theta})w_0(e^{i\theta})^{1/2}\leq 1 
\label{a9}
\end{equation}
for a.e.\  $\theta\in(0,2\pi)$; see the end of Section~\ref{sec.d} for the details
of this argument.

2. In fact, more than \eqref{a9} is true; we note without proof that 
the boundedness of $P_\pm^{(w_0)}$ implies that 
$$
(P_r*w_0)^{1/2}(P_r*w_1)(P_r*w_0)^{1/2}\leq C
$$
for all $r<1$ 
with some constant $C$; here $P_r*w_{0,1}$ is the convolution
with the Poisson kernel
\begin{equation}
P_r(\theta)=\frac{1-r^2}{1+r^2-2r\cos\theta}.
\label{a9a}
\end{equation}

3.
Of course, the boundedness of $P_\pm^{(w_0)}$ implies that the 
weighted Hilbert transform 
$$
(H^{(w_0)} f)(e^{i\theta})
=
\lim_{r\to1}
\int_{0}^{2\pi}
w_0(e^{it})
\frac{2\sin(\theta-t)}{1+r^2-2r \cos(\theta-t)} f(e^{it})\frac{dt}{2\pi}
$$
is a bounded map from $L^2(w_0)$ to $L^2(w_1)$.

4.
If $p=1$, the weight $w_1$ can be chosen to satisfy 
\begin{equation}
\int_0^{2\pi}
\norm{w_1(e^{it})}_1\frac{dt}{2\pi}<\infty;
\label{a10}
\end{equation}
see the end of Section~\ref{sec.d}.

5.
The weight function $w_1$ constructed in the Koosis theorem is non-degenerate 
in the sense that $\log w_1\in L^1(\bbT)$. 
The weight function $w_1$ that we construct in Theorem~\ref{th.a1} 
is also non-degenerate in the following sense. 
One has 
$$
w_0(\mu)
=
D_0^+(\mu)^* w_1(\mu) D_0^+(\mu), 
\quad 
\text{ a.e.\  $\mu\in\bbT$,}
$$
where $D_0^+$ is an operator valued function to be constructed below
(see \eqref{a25}). The function $D_0^+$ satisfies
$\norm{D_0^+(\cdot)}_\calB\in L^{1,\infty}(\bbT)$ and 
$D_0^+(\mu)$ has a bounded inverse for a.e.\  $\mu\in\bbT$.
In particular, 
\begin{equation}
\rank w_0(\mu)=\rank w_1(\mu), 
\quad 
\text{ a.e.\  $\mu\in\bbT$,}
\label{a12}
\end{equation}
and 
\begin{equation}
\norm{w_1(\mu)}_\calB
\geq 
\frac{\norm{w_0(\mu)}_\calB}{\norm{D_0^+(\mu)}_\calB^2}, 
\quad 
\text{ a.e.\  $\mu\in\bbT$.}
\label{a30}
\end{equation}
By \eqref{a30}, we have
$$
\log \norm{w_1(\mu)}_\calB
\geq
\log \norm{w_0(\mu)}_\calB
-
2\log^+\norm{D_0^+(\mu)}_\calB,
$$
and $\norm{D_0^+(\cdot)}_\calB\in L^{1,\infty}(\bbT)$ 
implies
$\log^+\norm{D_0^+(\cdot)}_\calB\in L^p(\bbT)$ for all $p<\infty$.

\subsection{The outline of the proof}\label{sec.a3}
We consider the absolutely continuous (a.c.)  operator valued measure on $\bbT$ given by
\begin{equation}
d\nu_0(e^{i\theta})=w_0(e^{i\theta})\frac{d\theta}{2\pi}.
\label{a13}
\end{equation}
For this measure $\nu_0$, we exhibit (see Lemma~\ref{lma.b1}) 
a Hilbert space $\calH$, a unitary operator $U_0$ in $\calH$
and a contraction $G:\calH\to\calK$ such that 
\begin{equation}
\nu_0(\delta)=GE_{U_0}(\delta)G^*, \quad \delta\subset\bbT,
\label{a14}
\end{equation}
where $E_{U_0}$ is the projection-valued 
spectral measure of $U_0$, 
and  $\delta\subset\bbT$ is any Borel set.  
Next, 
we  construct (see \eqref{b3}) a unitary operator $U_1$ in $\calH$ 
such that the identities
\begin{align}
(\alpha+\psi_0(z))
(\alpha-\psi_1(z))
&=I,
\label{a15}
\\
(\alpha-\psi_1(z))
(\alpha+\psi_0(z))
&=I,
\label{a16}
\end{align}
hold true for all $\abs{z}\not=1$; 
here $\alpha$ is the auxiliary 
bounded self-adjoint operator given by 
\begin{equation}
\alpha=\sqrt{I-(GG^*)^2},
\label{a17}
\end{equation}
and 
\begin{equation}
\psi_j(z)=i G\frac{U_j+z}{U_j-z} G^*.
\label{a17a}
\end{equation}
Further, similarly to \eqref{a14}, we set
\begin{equation}
\nu_1(\delta)=GE_{U_1}(\delta)G^*, 
\quad \delta\subset \bbT.
\label{a18}
\end{equation}
We will be able to prove (in Lemma~\ref{lma.c2}) that the a.c. part of the measure $\nu_1$
can be represented as
$$
d\nu^\ac_1(e^{i\theta})=w_1(e^{i\theta})\frac{d\theta}{2\pi}
$$
with some operator valued non-negative weight function $w_1$. 
Note that this is not automatic: the Radon-Nikodym theorem for operator
valued measures in general fails; to see this, consider the spectral measure
of a self-adjoint or unitary operator with a non-trivial a.c. component.

Key to our construction is the connection between the weighted 
Hardy projections $P_\pm^{(w_0)}$
and certain operators appearing in scattering theory for 
the pair $U_0$, $U_1$. We use the formalism suggested by de Branges
\cite{deB} with some simplifications due to Kuroda \cite{Ku}.
This formalism makes use of the weighted Hilbert spaces 
$L^2(\nu_j)$, $j=0,1$ of $\calK$-valued functions on $\bbT$.
They are defined, similarly to \eqref{a4b}, starting from the quasi-norm
$$
\norm{f}_{L^2(\nu_j)}^2
=
\int_0^{2\pi} d(\nu_j(e^{it})f(e^{it}),f(e^{it}))
$$
on the set $\calL$, by taking a quotient and then a closure. 
We note that $\nu_0=\nu^\ac_0$ and 
\begin{equation}
L^2(\nu_1)\subset L^2(\nu_1^\ac)
\quad \text{ and } \quad
\norm{f}_{L^2(\nu_1^\ac)}
\leq
\norm{f}_{L^2(\nu_1)}.
\label{a20}
\end{equation}

Following de Branges, we define some auxiliary bounded operators $X$, $Y_+$ and $Y_-$
acting from $L^2(\nu_0)$ to $L^2(\nu_1^{\ac})$.
First we denote (cf. \eqref{a15}, \eqref{a16})
\begin{equation}
D_0(z)
=
\alpha+\psi_0(z), 
\quad
D_1(z)
=
-\alpha+\psi_1(z).
\label{a21}
\end{equation}
By \eqref{a15}, \eqref{a16} we have
\begin{equation}
D_0(z)D_1(z)=D_1(z)D_0(z)=-I, \quad \abs{z}\not=1.
\label{a22}
\end{equation}
Let 
\begin{equation}
X: L^2(\nu_0)\to L^2(\nu_1)
\label{a23}
\end{equation}
be the linear operator, defined on the dense set $\calL$ by 
\begin{equation}
(Xf)(\mu)
=
\sum_i (\mu-z_i)^{-1} D_0(z_i)\chi_i, 
\quad
f(\mu)=\sum_i (\mu-z_i)^{-1}\chi_i.
\label{a24}
\end{equation}
It turns out (see Lemma~\ref{lma.b2}) that $X$ is a unitary operator
between the spaces \eqref{a23}. Moreover, this is
true for \emph{any} operators $U_0$, $U_1$, $G$, $\alpha$, 
related by \eqref{a15}--\eqref{a17a}; assumption \eqref{a6} is 
not relevant here.
This fact is part of  de Branges' construction \cite{deB}.
Bearing in mind the embedding \eqref{a20}, we see that
$X$ is a contraction as a map from $L^2(\nu_0)$ 
to $L^2(\nu_1^\ac)$.

Further, by the spectral theorem for the unitary operator $U_0$, 
we have
\begin{equation}
\psi_0(z)
=
i\int_0^{2\pi}\frac{e^{it}+z}{e^{it}-z}d\nu_0(e^{it})
=
i\int_0^{2\pi}\frac{e^{it}+z}{e^{it}-z} w_0(e^{it})\frac{dt}{2\pi}.
\label{a24a}
\end{equation}
Thus, $\psi_0$ is the Cauchy transform of $w_0$. Using  assumption 
\eqref{a6} on the weight $w_0$ and the \emph{UMD property} 
(see e.g. \cite{deF}) of the space $\Sch_p$, $1<p<\infty$, 
we check (in Lemma~\ref{lma.c1}) that the limits 
\begin{equation}
D_0^\pm(e^{i\theta})
=
\lim_{r\to1\pm0}
D_0(re^{i\theta}),
\quad
D_1^\pm(e^{i\theta})
=
\lim_{r\to1\pm0}
D_1(re^{i\theta})
\label{a25} 
\end{equation}
exist for a.e.\  $\theta\in(0,2\pi)$ in the operator norm.
For $p=1$, this was proven in \cite{deB}; for $p>1$, this fact is 
borrowed from from our related work \cite{PuV}. 

Again following de Branges, we consider the operators 
\begin{equation}
Y_\pm: f(\mu)\mapsto D_0^{\pm}(\mu)f(\mu), 
\quad
\mu\in\bbT,
\label{a26}
\end{equation}
defined initially on the set $\calL$, and show that $Y_\pm$ extend
as isometric operators 
$$
Y_\pm: L^2(\nu_0)\to L^2(\nu_1^\ac).
$$
Finally, a simple calculation (see Section~\ref{sec.d})  shows that 
$P_\pm^{(w_0)}$, $X$, $Y_\pm$ are related by \eqref{a7a}.
We note that $Y_\pm$ are unitarily equivalent to the wave operators
$W_\pm(U_1,U_0)$ (see \cite{Ku}), although we will not need this fact.

\section{Identities \eqref{a15}, \eqref{a16} and the map $X$}\label{sec.b}

\subsection{The construction of $G$, $U_0$, $U_1$, $\alpha$ }\label{sec.b1}

Let $\calH$ be the Hilbert space of all Borel measurable $\calK$-valued 
functions on $\bbT$ with the norm
$$
\norm{f}_\calH^2
=
\int_0^{2\pi} \norm{f(e^{it})}^2 \frac{dt}{2\pi}. 
$$
Let $U_0$ be the operator of multiplication by $e^{it}$ in $\calH$. 
Let $G:\calH\to\calK$ be defined by 
$$
Gf=\int_0^{2\pi} w_0(e^{it})^{1/2} f(e^{it})\frac{dt}{2\pi}. 
$$
Then our assumption \eqref{a6} implies that $G$ is a contraction:
\begin{multline*}
\norm{Gf}
\leq
\left(\int_0^{2\pi}\norm{w_0(e^{it})^{1/2}}_\calB^2\frac{dt}{2\pi}\right)^{1/2}
\left(\int_0^{2\pi}\norm{f(e^{it})}^2\frac{dt}{2\pi}\right)^{1/2}
\\
=
\left(\int_0^{2\pi}\norm{w_0(e^{it})}_\calB\frac{dt}{2\pi}\right)^{1/2}
\norm{f}_\calH
\leq
\norm{f}_\calH.
\end{multline*}
It is clear that setting 
$\nu_0(\delta)=GE_{U_0}(\delta)G^*$ (see \eqref{a14}) yields \eqref{a13}. 
Next, let 
$$
\Theta =2\sin^{-1}(G^*G); 
$$
thus, $\Theta$ is a bounded self-adjoint operator in $\calH$ with 
$\sigma(\Theta)\subset [0,\pi)$ and 
\begin{equation}
G^*G=\sin(\tfrac12 \Theta).
\label{b2}
\end{equation}
Set 
\begin{equation}
U_1=\exp(\tfrac{i}2 \Theta)U_0\exp(\tfrac{i}2 \Theta)
\label{b3}
\end{equation}
and let $\alpha$ be defined by \eqref{a17}.

\begin{lemma}\label{lma.b1}
Let $U_0$, $U_1$, $G$, $\alpha$ be as described above. 
Then identities \eqref{a15}, \eqref{a16} hold true. 
The measure $\nu_1$, defined by \eqref{a18}, satisfies
$\nu_1(\bbT)=\nu_0(\bbT)$ and 
\begin{equation}
\norm{\nu_1(\bbT)}\leq 1. 
\label{b3a}
\end{equation}
\end{lemma}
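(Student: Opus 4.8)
The plan is to dispose of the statements about $\nu_1$ quickly and then to establish the operator identities \eqref{a15}, \eqref{a16} by a direct computation resting on the Woodbury resolvent formula. For $\nu_1$: since $E_{U_1}(\bbT)=I$, formula \eqref{a18} gives $\nu_1(\bbT)=GG^*$, and the same computation from \eqref{a14} gives $\nu_0(\bbT)=GG^*$; hence $\nu_1(\bbT)=\nu_0(\bbT)$, and $\norm{\nu_1(\bbT)}=\norm{GG^*}=\norm{G}^2\le1$ since $G$ is a contraction.

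For the identities, set $V=\exp(\tfrac{i}{2}\Theta)$, so that $V$ is unitary, $U_1=VU_0V$, and, by \eqref{b2}, $V=C+iG^*G$ with $C:=\cos(\tfrac12\Theta)=\sqrt{I-(G^*G)^2}\ge0$ (using $\sigma(\Theta)\subset[0,\pi)$). The elementary intertwining $G\,p(G^*G)=p(GG^*)\,G$ (true for polynomials, hence for continuous $p$ on the joint spectrum, which lies in $[0,1]$), applied with $p(x)=\sqrt{1-x^2}$, gives $GC=\alpha G$, whence $GV^*=\beta G$ and $V^*G^*=G^*\beta$ with $\beta:=\alpha-iGG^*$. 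Put also $\gamma:=\alpha+iGG^*=\beta^*$; since $\alpha$ commutes with $GG^*$ and $\alpha^2+(GG^*)^2=I$ (recall $\alpha=\sqrt{I-(GG^*)^2}$), one has $\gamma\beta=\beta\gamma=I$, i.e. $\gamma=\beta^{-1}$. Finally, writing $R_0(z)=(U_0-z)^{-1}$ and $m(z)=GR_0(z)G^*$ (both bounded for $\abs z\ne1$), the identity $(U_0+z)(U_0-z)^{-1}=I+2zR_0(z)$ and \eqref{a17a} give $\psi_0(z)=iGG^*+2iz\,m(z)$.

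The computational heart is the algebraic identity
\[
V^{*2}-I=-2iG^*\beta G,
\]
obtained by expanding $(C-iG^*G)^2$ and using $C^2=I-(G^*G)^2$ together with $CG^*G=G^*GC=G^*\alpha G$. Therefore $U_0-zV^{*2}=(U_0-z)+2izG^*\beta G$ is a perturbation of $U_0-z$ of the sandwiched form $G^*(\,\cdot\,)G$; it is invertible for $\abs z\ne1$ because $V^{*2}$ is unitary, so the Woodbury formula applies (with inner factor $2iz\beta$, invertible for $z\ne0$), and squeezing the result between $G$ and $G^*$ and simplifying gives
\[
G(U_0-zV^{*2})^{-1}G^*=m(z)\bigl(I+2iz\beta\,m(z)\bigr)^{-1}.
\]
Then $(U_1-z)^{-1}=V^*(U_0-zV^{*2})^{-1}V^*$, combined with $GV^*=\beta G$, $V^*G^*=G^*\beta$ and $\psi_1(z)=iGG^*+2izG(U_1-z)^{-1}G^*$, yields
\[
\psi_1(z)=iGG^*+2iz\,\beta\,m(z)\bigl(I+2iz\beta\,m(z)\bigr)^{-1}\beta
=\alpha-\bigl(I+2iz\beta\,m(z)\bigr)^{-1}\beta,
\]
where the second equality uses $N(I+N)^{-1}=I-(I+N)^{-1}$ with $N=2iz\beta\,m(z)$ and $iGG^*+\beta=\alpha$.

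It then remains to assemble the identities. We have $\alpha-\psi_1(z)=\bigl(I+2iz\beta\,m(z)\bigr)^{-1}\beta$, while $\alpha+\psi_0(z)=\gamma+2iz\,m(z)=\gamma\bigl(I+2iz\beta\,m(z)\bigr)$ because $\gamma\beta=I$; multiplying these in both orders and using $\gamma\beta=\beta\gamma=I$ gives $(\alpha+\psi_0(z))(\alpha-\psi_1(z))=\gamma\beta=I$ and $(\alpha-\psi_1(z))(\alpha+\psi_0(z))=I$ for all $\abs z\ne1$ --- precisely \eqref{a15} and \eqref{a16}; the value $z=0$, excluded by the Woodbury step, is checked directly, since there $\psi_j(0)=iGG^*$ and both identities reduce to $\gamma\beta=\beta\gamma=I$. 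The main obstacle is the bookkeeping in the middle step --- recognising that $V^{*2}-I$ has the sandwiched form $G^*(\,\cdot\,)G$, which is exactly what makes the Woodbury reduction collapse, and keeping careful track of whether each operator acts on $\calH$ or on $\calK$; the rest is routine algebra.
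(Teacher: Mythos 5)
Your proof is correct, and the treatment of $\nu_0(\bbT)=\nu_1(\bbT)=GG^*$ and of \eqref{b3a} is identical to the paper's. For the identities \eqref{a15}, \eqref{a16} the paper works from the same two ingredients --- $U_1=e^{i\Theta/2}U_0e^{i\Theta/2}$ and the intertwining $\alpha G=G\cos(\tfrac12\Theta)$ --- but organizes the algebra differently: it first derives the operator identity $U_1G^*G+G^*GU_0+i\bigl(U_1\cos(\tfrac12\Theta)-\cos(\tfrac12\Theta)U_0\bigr)=0$ on $\calH$, upgrades it to a polynomial identity in $z$, multiplies by $(U_1-z)^{-1}$ on the left and $(U_0-z)^{-1}$ on the right, and only then sandwiches with $G$ and $G^*$, obtaining \eqref{a16} in one stroke and deducing \eqref{a15} by taking adjoints and replacing $z$ by $\overline{z}^{-1}$. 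Your route pulls the perturbation down to $\calK$ at the outset, via $U_1-z=V(U_0-zV^{*2})V$ and $V^{*2}-I=-2iG^*(\alpha-iGG^*)G$, and then invokes the Woodbury formula; this yields something slightly stronger than the lemma asks for, namely the explicit factorizations $\alpha+\psi_0(z)=(\alpha+iGG^*)\bigl(I+2iz\beta m(z)\bigr)$ and $\alpha-\psi_1(z)=\bigl(I+2iz\beta m(z)\bigr)^{-1}\beta$, from which both orderings of the product are immediate without any adjoint symmetry argument. I verified the computational steps (the identity $V^{*2}-I=-2iG^*\beta G$, the reduction $G(U_0-zV^{*2})^{-1}G^*=m(z)(I+2iz\beta m(z))^{-1}$, and the resulting formula for $\psi_1$) and they are all sound. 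Two cosmetic caveats: your $\beta=\alpha-iGG^*$ is an operator on $\calK$ and clashes with the paper's $\beta=\sqrt{I-(G^*G)^2}$ on $\calH$ (which plays the role of your $C$), so the notation should be changed if this is to sit alongside the paper's text; and the exclusion of $z=0$ forced by your form of Woodbury is correctly patched at the end, though it could be avoided altogether by using the variant $(A+G^*BG)^{-1}=A^{-1}-A^{-1}G^*B(I+GA^{-1}G^*B)^{-1}GA^{-1}$, which does not require $B$ to be invertible.
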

\begin{proof}
Denote 
$$
\beta=\sqrt{I-(G^*G)^2};
$$
clearly, we have 
\begin{equation}
\alpha G=G\beta.
\label{b5}
\end{equation}
Comparing \eqref{b2} and the definition of $\beta$, we find that
$$
\beta=\cos(\tfrac12\Theta).
$$
Using this and a little algebra, we obtain
$$
U_1G^*G+G^*GU_0+i(U_1\beta-\beta U_0)=0.
$$
From here by straightforward manipulation we obtain the identity
$$
(U_1-z)G^*G(U_0-z)
+
i\bigl((U_1+z)\beta(U_0-z)-(U_1-z)\beta(U_0+z)\bigr)
-
(U_1+z)G^*G(U_0+z)=0
$$
for any $z\in\bbC$. Taking $\abs{z}\not=1$ and multiplying 
by $(U_1-z)^{-1}$ on the left and by $(U_0-z)^{-1}$ on the right, 
we get
$$
G^*G
+
i\left(
\frac{U_1+z}{U_1-z}\beta-\beta\frac{U_0+z}{U_0-z}
\right)
-
\frac{U_1+z}{U_1-z}G^*G\frac{U_0+z}{U_0-z}
=0.
$$
Multiplying this by $G$ on the left and by $G^*$ on the right
and using that (by \eqref{a17})
$$
(GG^*)^2=I-\alpha^2,
$$
we obtain 
$$
-\alpha^2
+
iG\frac{U_1+z}{U_1-z}\beta G^*
-
i G\beta\frac{U_0+z}{U_0-z} G^*
-
G\frac{U_1+z}{U_1-z}G^* G\frac{U_0+z}{U_0-z} G^*
=
-I.
$$
Finally, using \eqref{b5}, this transforms into \eqref{a16}. 
The relation \eqref{a15} is obtained by taking adjoints in \eqref{a16} and 
changing $z$ to $\overline{z}^{-1}$.
By \eqref{a14}, \eqref{a18}, we have $\nu_0(\bbT)=\nu_1(\bbT)=GG^*$. 
The estimate \eqref{b3a} follows from 
the inequality $\norm{G}\leq 1$. 
\end{proof}

\begin{remark}
In fact, the construction of \cite{deB,Ku} allows for a whole family of 
possible choices for operators $G$, $U_0$, $U_1$, suitable for our
argument. For simplicity, we have chosen only one representative of this 
family. 
\end{remark}

\begin{remark}
In order to clarify the ideas behind Lemma~\ref{lma.b1}, let us sketch 
the analogous argument for the case of the weights $w_0$, $w_1$ 
on the real line. 
In this case the construction naturally leads to self-adjoint (rather than
unitary) operators and the algebra is somewhat more 
transparent. Let a non-negative weight $w_0:\bbR\to\calB(\calK)$ 
satisfy 
$$
\int_\bbR\norm{w_0(t)}_\bbR dt<\infty.
$$
Let $\calH_\bbR$ be the $L^2$ space of $\calK$-valued functions
on $\bbR$ with the norm
$$
\norm{f}_{\calH_\bbR}^2
=
\int_\bbR \norm{f(t)}^2dt.
$$
Let $A_0$ be the operator of multiplication by the independent variable
$t$ in $\calH_\bbR$ and let $G_\bbR:\calH_\bbR\to\calK$  be given by 
$$
G_\bbR f=\int_\bbR w_0(t)^{1/2} f(t)dt.
$$
We set $A_1=A_0+G_\bbR^*G_\bbR$. Then from the standard resolvent identity we get
\begin{multline*}
(I+G_\bbR(A_0-z)^{-1}G_\bbR^*)
(I-G_\bbR(A_1-z)^{-1}G_\bbR^*)
\\
=
(I-G_\bbR(A_1-z)^{-1}G_\bbR^*)
(I+G_\bbR(A_0-z)^{-1}G_\bbR^*)
=I;
\end{multline*}
this is the analogue of \eqref{a15}, \eqref{a16}. 
One sets 
$$
\nu_j^\bbR(\delta)=G_\bbR E_{A_j}(\delta)G_\bbR^*, 
\quad
j=0,1,
\quad
\delta\subset \bbR,
$$
and the rest of the construction is very similar to the case of measures on $\bbT$. 
\end{remark}

\subsection{The map $X$}\label{sec.b2}
Let the map $X$ be defined by \eqref{a23}, \eqref{a24}.
\begin{lemma}\label{lma.b2}
The map $X$ is unitary between the spaces $L^2(\nu_0)$ and $L^2(\nu_1)$. 
\end{lemma}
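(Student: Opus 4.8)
The plan is to show that $X$, defined on the dense subspace $\calL$ by \eqref{a24}, is \emph{isometric} from $L^2(\nu_0)$ into $L^2(\nu_1)$, that it has dense range, and that the same formula with the roles of $\nu_0$ and $\nu_1$ interchanged (and $D_0(z)$ replaced by $D_1(z)$) gives the inverse. First I would compute the inner product $(Xf, Xg)_{L^2(\nu_1)}$ for two elementary functions $f(\mu) = (\mu - z)^{-1}\chi$, $g(\mu) = (\mu - z')^{-1}\chi'$ in $\calL$. By definition of $\nu_1$ via \eqref{a18}, this inner product becomes $(E_{U_1}(\cdot)\,D_0(z)\chi, D_0(z')\chi')$ integrated against $(\mu - z)^{-1}\overline{(\mu - z')^{-1}}$, which by the spectral theorem for $U_1$ collapses to an expression of the form $(R_1(z,z')\,G^* D_0(z)\chi,\, G^* D_0(z')\chi')$ for a rank-one-in-$z$ resolvent kernel, or more precisely to a contour-integral identity that can be evaluated using the resolvent identity for $U_1$. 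The analogous computation for $(f,g)_{L^2(\nu_0)}$ produces the same kind of expression but with $U_0$ in place of $U_1$ and without the $D_0$ factors.

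The heart of the matter is then to verify that these two expressions coincide, and this is exactly where identities \eqref{a15}, \eqref{a16} enter. Writing $\psi_j(z) = iG\frac{U_j+z}{U_j-z}G^*$, one has the algebraic identity $G^* \psi_0(z) = \big(\text{something involving }U_0\big)$, and the key point is that $D_0(z) = \alpha + \psi_0(z)$ intertwines the $U_0$-resolvent kernel with the $U_1$-resolvent kernel in precisely the way needed to match $\nu_0$ with $\nu_1$. Concretely, I expect to reduce everything to an identity of the form
\begin{equation*}
D_0(z')^* \, \frac{1 - \overline{z'}z}{\;\cdots\;} \, \nu_1\text{-kernel} \, D_0(z) = \nu_0\text{-kernel},
\end{equation*}
and to establish it by multiplying out $(\alpha + \psi_0(z'))^*(\alpha + \psi_0(z))$-type products and repeatedly applying \eqref{a15}--\eqref{a16} together with the fact that $\psi_j(z)^* = -\psi_j(\overline z^{-1})$ (so that on $\bbT$, $\psi_j$ has the appropriate symmetry). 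A clean way to organize this is to observe that $D_0(z)D_1(z) = -I$ from \eqref{a22}, so $D_0(z)^{-1} = -D_1(z)$; this lets one rewrite the matching identity without inverses and makes the bookkeeping manageable.

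Once isometry is established on elementary functions, it extends by sesquilinearity to all of $\calL$ and hence, by density, to an isometry $X : L^2(\nu_0) \to L^2(\nu_1)$. For surjectivity, I would define $\wt X : L^2(\nu_1) \to L^2(\nu_0)$ by the same recipe using $D_1(z)$ in place of $D_0(z)$, show it is isometric by the symmetric argument (using \eqref{a15}, \eqref{a16} in the other order), and check on $\calL$ that $\wt X X f = f$ and $X \wt X g = g$ — both of which reduce, term by term, to $D_1(z)D_0(z) = -I$ and $D_0(z)D_1(z) = -I$ from \eqref{a22}, up to a harmless sign that I would track carefully (the factors of $i$ and signs in \eqref{a17a} and \eqref{a21} are the likely source of sign errors). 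Since a surjective isometry is unitary, this completes the proof. The main obstacle I anticipate is purely computational bookkeeping: correctly handling the contour/partial-fraction manipulations that turn $(\mu-z)^{-1}\overline{(\mu-z')^{-1}}$ into resolvent kernels of $U_0$ and $U_1$, and then pushing the identities \eqref{a15}--\eqref{a16} through to match them — there is no deep difficulty, but it is the kind of calculation where a stray sign or a swapped $z \leftrightarrow \overline z^{-1}$ derails everything, so I would set up the notation for the two-variable resolvent kernel once and reuse it throughout.
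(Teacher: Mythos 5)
Your proposal follows essentially the same route as the paper: isometry on elementary functions via a kernel identity of the form $D_0(z_2)^*\,(\nu_1\text{-kernel})\,D_0(z_1)=\nu_0\text{-kernel}$ (the paper's \eqref{b9}, obtained by combining \eqref{a22} with the divided-difference formula \eqref{b7} for $\psi_j$), then extension by density, then surjectivity via the companion map built from $D_1$ and the relation $XX_1=-I$. One correction to your bookkeeping: the symmetry is $\psi_j(z)^*=\psi_j(\overline{z}^{-1})$ with \emph{no} minus sign (the paper's \eqref{b8}), and it is precisely this version that makes the divided-difference computation close.
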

\begin{proof}
For $j=0,1$, the functions $\psi_j$ (see \eqref{a17a}) can be expressed as 
\begin{equation}
\psi_j(z)=i\int_0^{2\pi}\frac{e^{it}+z}{e^{it}-z}d\nu_j(e^{it}).
\label{b6}
\end{equation}
We note two identities for $\psi_j$:
\begin{gather}
\frac{\psi_j(z_1)-\psi_j(z_2)^*}{z_1-\overline{z_2}^{-1}}
=
2i\int_0^{2\pi} 
\frac{e^{it}}{(e^{it}-z_1)(e^{it}-\overline{z_2}^{-1})} d\nu_j(e^{it}),
\label{b7}
\\
\psi_j(z)^*=\psi_j(\overline{z}^{-1}).
\label{b8}
\end{gather}
Next, using \eqref{a21}, \eqref{a22}, we have for $\abs{z_{1,2}}\not=1$:
\begin{multline*}
\psi_0(z_1)-\psi_0(z_2)^*
=
D_0(z_1)-D_0(z_2)^*
\\
=
-D_0(z_2)^*D_1(z_2)^*D_0(z_1)
+
D_0(z_2)^*D_1(z_1)D_0(z_1)
\\
=
D_0(z_2)^*(-\psi_1(z_2)^*+\psi_1(z_2))D_0(z_1).
\end{multline*}
Combining this with \eqref{b7}, \eqref{b8}, we get
\begin{equation}
\int_0^{2\pi}
\frac{d\nu_0(e^{it})}{(e^{-it}-\overline{z_2})(e^{it}-z_1)}
=
D_0(z_2)^*
\int_0^{2\pi}
\frac{d\nu_1(e^{it})}{(e^{-it}-\overline{z_2})(e^{it}-z_1)}
D_0(z_1).
\label{b9}
\end{equation}
Now let 
\begin{equation}
f_1(\mu)=(\mu-z_1)^{-1}\chi_1,
\quad
f_2(\mu)=(\mu-z_2)^{-1}\chi_2,
\quad
\mu\in\bbT,
\label{b10}
\end{equation}
where $\abs{z_{1,2}}\not=1$ and $\chi_{1,2}\in\calK$. 
Then from \eqref{b9} we get
$$
(f_1,f_2)_{L^2(\nu_0)}
=
(Xf_1,Xf_2)_{L^2(\nu_1)}.
$$
This extends to all $f_1,f_2\in\calL$. 
It follows that $X$ is an isometry.
By considering an operator $X_1$ defined in a similar way with $D_1$ instead of 
$D_0$, and using \eqref{a22}, we obtain $XX_1=-I$, hence $X$ is a surjection. 
Thus, $X$ is a unitary operator. 
\end{proof}

\section{The boundary values of $D_0$ and $D_1$}\label{sec.c}

\subsection{Existence of boundary values of $D_0$ and $D_1$}\label{sec.c1}

\begin{lemma}\label{lma.c1}
The limits $D_0^\pm(e^{i\theta})$, 
$D_1^\pm(e^{i\theta})$ (see \eqref{a25}) exist for a.e.\  $\theta\in(0,2\pi)$
in the operator norm.
\end{lemma}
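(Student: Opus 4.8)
The plan is to reduce the existence of the operator-norm boundary values to a scalar-valued statement about $\calB(\calK)$-valued Cauchy transforms of integrable functions, and then invoke the known boundary-value theory. By \eqref{a21} it suffices to treat $\psi_0$ and $\psi_1$, and by \eqref{a24a} the function $\psi_0$ is the (suitably normalised) Cauchy transform of $w_0$. Since $w_0 \in L^1(\bbT;\Sch_p)$ by \eqref{a6}, the radial limits $\psi_0^\pm(e^{i\theta}) = \lim_{r\to 1\mp 0}\psi_0(re^{i\theta})$ exist for a.e.\ $\theta$ in the $\Sch_p$-norm, hence a fortiori in $\norm{\cdot}_\calB$: for $p=1$ this is de Branges' result from \cite{deB}, while for $1<p<\infty$ it follows from the UMD property of $\Sch_p$ (which makes the Hilbert transform bounded on $L^q(\bbT;\Sch_p)$ for all $q\in(1,\infty)$ by \cite{deF}) combined with the weak-type/$L^1$ boundary-value argument carried out in our companion paper \cite{PuV}. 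Adding the constant operator $\alpha$ from \eqref{a17} then gives the existence of $D_0^\pm$.

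For $D_1^\pm$ I would argue via identity \eqref{a22}: since $D_0(z)D_1(z) = -I$ for all $\abs{z}\ne 1$, we have $D_1(z) = -D_0(z)^{-1}$, so it is enough to show that $D_0^\pm(e^{i\theta})$ is boundedly invertible for a.e.\ $\theta$ and that $z\mapsto D_0(z)^{-1}$ has the same radial limits. One clean way to see invertibility of the boundary values is to apply the same Cauchy-transform analysis directly to $\psi_1$, which by \eqref{b6} is the Cauchy transform of the measure $\nu_1$; the a.c.\ part $\nu_1^\ac$ contributes boundary values by the same mechanism, and one must check that the singular part of $\nu_1$ does not spoil the a.e.\ radial limit — this is standard for scalar Cauchy transforms (the singular part contributes a limit that is a.e.\ finite off the support), and the operator-valued version reduces to it by testing against vectors $\chi\in\calK$ and using that $\nu_1(\bbT)=GG^*$ is trace-class-dominated when $p=1$, or bounded in general. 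Having $\psi_1^\pm$ gives $D_1^\pm = -\alpha + \psi_1^\pm$, and then \eqref{a22} passes to the limit to yield $D_0^\pm D_1^\pm = D_1^\pm D_0^\pm = -I$, so each boundary value is invertible with inverse the other, completing the proof.

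The main obstacle, and the point where I would be most careful, is the passage from $\Sch_p$-norm convergence to $\norm{\cdot}_\calB$-norm convergence \emph{and} the handling of the singular part of $\nu_1$ in the $D_1$ argument: one cannot expect an operator-valued Radon–Nikodym decomposition a priori (as the paper itself warns in \S\ref{sec.a3}), so establishing that $\psi_1(re^{i\theta})$ has radial limits for a.e.\ $\theta$ requires knowing that the singular part of $\nu_1$ behaves like a classical singular measure under Cauchy transformation — concretely, that $\int \frac{d\nu_1(e^{it})}{e^{it}-re^{i\theta}}$ converges for a.e.\ $\theta$ even though $\nu_1$ may have a genuine operator-valued singular component. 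I would dispatch this by diagonal scalarisation: for a countable dense set of $\chi\in\calK$ the scalar measures $(\nu_1(\cdot)\chi,\chi)$ have Cauchy transforms with a.e.\ radial limits, and a polarisation plus uniform-boundedness argument (using $\norm{\psi_1(re^{i\theta})}_\calB \le \norm{\nu_1(\bbT)} \cdot \frac{1+r}{1-r}$-type bounds replaced by the sharper a.e.\ finiteness coming from the scalar theory) upgrades this to $\norm{\cdot}_\calB$-convergence. Since \cite{deB} and \cite{PuV} already carry out precisely this kind of analysis, I would cite them for the technical core and only indicate how \eqref{a22} converts the $D_0$ statement into the $D_1$ statement.
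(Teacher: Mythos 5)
Your treatment of $D_0^\pm$ matches the paper: reduce to the Cauchy transform $\psi_0$ of $w_0$, cite de Branges for $p=1$, and for $1<p<\infty$ use the UMD property of $\Sch_p$ together with the weak-type maximal-function/Privalov-domain argument from \cite{PuV} to get a.e.\ non-tangential limits in $\Sch_p$, hence in $\norm{\cdot}_\calB$. That half is fine.

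The second half has a genuine gap. You reduce, correctly, to showing that $D_0^\pm(e^{i\theta})$ is boundedly invertible a.e., but you then try to establish this by proving directly that $\psi_1(re^{i\theta})$ has a.e.\ radial limits in operator norm, starting from the scalar measures $(\nu_1(\cdot)\chi_1,\chi_2)$. The ``diagonal scalarisation plus polarisation plus uniform boundedness'' step does not deliver what you need: for a countable dense set of vectors it gives a.e.\ \emph{weak} convergence of $\psi_1(re^{i\theta})$, and Banach--Steinhaus then gives a.e.\ boundedness of $\sup_r\norm{\psi_1(re^{i\theta})}_\calB$, but a bounded weakly convergent net of operators need not converge in $\calB(\calK)$-norm (think of rank-one projections onto an orthonormal sequence, which tend weakly to $0$ but have norm $1$). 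Since $\nu_1$ is a genuinely operator-valued measure with a possible singular part and no $L^1(\bbT;\Sch_p)$ density, the UMD machinery you used for $\psi_0$ is not available either, so the existence of $\psi_1^\pm$ in operator norm is exactly the point that remains unproved. The paper avoids $\nu_1$ altogether at this stage: it writes $D_0(z)=D_0(0)\bigl(I+D_0(0)^{-1}(\psi_0(z)-\psi_0(0))\bigr)$, observes that the perturbation lies in $\Sch_p$, and considers the scalar analytic function $d(z)=\Det_q\bigl(I+D_0(0)^{-1}(\psi_0(z)-\psi_0(0))\bigr)$ for an integer $q\geq p$; by the first step $d$ has a.e.\ boundary values, by Privalov's uniqueness theorem these are nonzero a.e., and since $\Det_q(I+A)\neq0$ iff $I+A$ is invertible, $D_0^\pm(e^{i\theta})$ is boundedly invertible a.e.\ and $D_1^\pm=-(D_0^\pm)^{-1}$ exists by continuity of inversion. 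If you want to keep your route, you would have to supply a vector-valued Fatou theorem for Cauchy transforms of $\Sch_p$-valued measures in the operator norm, which is precisely the nontrivial content the determinant trick is designed to bypass.
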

For $p=1$, this was proven in \cite{deB}.
\begin{proof}
1. First we consider the limits $D_0^\pm$. 
We have
$$
D_0(z)=\alpha+\psi_0(z),
$$
where $\psi_0(z)$ is given by \eqref{b6}. 
Thus, it suffices to consider the limits of $\psi_0$. 
By \eqref{b8}, it suffices to consider the limits as $z$ 
approaches the unit circle from inside the unit disk. 
Without loss of generality assume $p>1$ in \eqref{a6}. 
In fact, we will prove the existence of the non-tangential limits
$$
\lim_{\genfrac{}{}{0pt}{}{z\to e^{i\theta}}{z\in S_\theta}} \psi_0(z)
$$
in the norm of $\Sch_p$. Here $S_\theta$ is the appropriate sector of opening $\pi/2$ with the vertex at $e^{i\theta}$
(see e.g. \cite[Section VIII:C3]{Koosis2}).
The argument below is presented in more detail in our related work \cite{PuV}.

The function $\psi_0$ is the Cauchy transform of the weight function $w_0$ 
(see \eqref{a24a}). 
Consider the non-tangential maximal function 
$$
(T w_0)(e^{i\theta})
=
\sup \left\{\Norm{\psi_0(z)}_p: z\in S_\theta\right\}.
$$
The key fact is that for $1<p<\infty$, 
the Banach space $\Sch_p$ possesses the UMD property, see \cite{deF}; 
that is, the Hilbert transform and many other integral transforms are bounded as operators
in $L^2$ spaces of $\Sch_p$-valued functions. 
Using this, one can prove that the (non-linear) operator $T$ is of the weak 1-1 type, i.e. 
$T w_0$ belongs to the weak $L^{1,\infty}(\bbT)$ class. 

Next, using this fact and repeating the classical construction of Privalov's uniqueness theorem
(see e.g. \cite[Section III:D]{Koosis2}), for any $\eps>0$ one constructs 
a simply connected domain $\calD$ in the unit disk such that $\norm{\psi_0}_p$ is bounded in $\calD$ 
and the boundary of $\calD$ contains the unit circle $\bbT$ up to a set of measure $\eps$. 
Let $\varphi$ be a conformal map of the unit disk onto $\calD$. Then $F(z)=\psi_0(\varphi(z))$
is a bounded $\Sch_p$-valued analytic function on the unit disk. 
By standard results on Banach space valued analytic functions (see e.g. \cite{Bu}), 
$F(z)$ attains non-tangential boundary values in $\Sch_p$ norm a.e.\  on the unit circle. 
It follows that the function $\psi_0$ attains non-tangential boundary values 
in $\Sch_p$ norm on the unit circle minus a set of measure $\eps$. 
Sending $\eps\to0$, one obtains the desired result. 

2. 
Let us consider the limits of $D_1$. 
Since $D_1(z)=-D_0(z)^{-1}$, it suffices to prove that the limiting operators $D_0^\pm(e^{i\theta})$
have bounded inverses for a.e.\  $\theta$. 
We do this by employing an argument from \cite{Yafaev}. 
We have
$$
D_0(z)=D_0(0)\left(I+D_0(0)^{-1}(D_0(z)-D_0(0))\right), 
$$
and therefore it suffices to check that the operators
\begin{equation}
I+D_0(0)^{-1}(D_0^\pm(e^{i\theta})-D_0(0))
\label{c1}
\end{equation}
have a bounded inverse for a.e.\  $\theta$. 
By \eqref{a6}, we have $\psi_0(z)\in\Sch_p$ for all $\abs{z}\not=1$. 
Let $q\geq p$ be any integer; consider the regularised determinant
$$
d(z)=\Det_q(I+D_0(0)^{-1}(\psi_0(z)-\psi_0(0))).
$$
The functional $A\mapsto\Det_q(I+A)$ is continuous (in fact, analytic) on $\Sch_q$. 
Thus, $d(z)$ is analytic in $z$ and by the previous step of the proof, $d(z)$ has 
non-tangential boundary values a.e.\  on the unit circle. 
Applying Privalov's uniqueness theorem, we obtain that these boundary values
are non-zero a.e.\  on the unit circle. Now since $\Det_q(I+A)\not=0$ if and only
if $I+A$ has a bounded inverse, we conclude that the operators \eqref{c1} 
have bounded inverses for a.e.\  $\theta$. 
\end{proof}

\subsection{The a.c. part of $\nu_1$}\label{sec.c2}
Taking $z_1=z_2=re^{i\theta}$ in \eqref{b7}, 
one obtains
\begin{equation}
\psi_j(re^{i\theta})-\psi_j(re^{i\theta})^*
=
2i\int_0^{2\pi} P_r(\theta-t)d\nu_j(e^{it}),
\label{c2}
\end{equation}
where $P_r$ is the Poisson kernel \eqref{a9a} on $\bbT$. 
From the existence of the boundary 
values of $\psi_j$ on $\bbT$ (see Lemma~\ref{lma.c1})
it folows that the r.h.s. of \eqref{c2} attains a limit 
(in the operator norm) as $r\to1$ for a.e.\  $\theta\in(0,2\pi)$. 
Of course, by the definition \eqref{a13} of $\nu_0$ we have
\begin{equation}
w_0(e^{i\theta})
=
\lim_{r\to1} 
\int_0^{2\pi} P_r(\theta-t)d\nu_0(e^{it})
\label{c7}
\end{equation}
for a.e.\  $\theta$. 
Similarly, we \emph{define} the weight function $w_1$
by 
\begin{equation}
w_1(e^{i\theta})= \lim_{r\to1} 
\int_0^{2\pi} P_r(\theta-t)d\nu_1(e^{it})
\label{c3}
\end{equation}
for a.e.\  $\theta$. 
In Lemmas~\ref{lma.c2} and \ref{lma.c3}, we follow de Branges' work \cite{deB}.

\begin{lemma}\label{lma.c2}
The a.c. part of the measure $\nu_1$ is given by 
\begin{equation}
d\nu_1^\ac(e^{i\theta})=w_1(e^{i\theta})\frac{d\theta}{2\pi}, 
\quad
\text{ a.e.\  $\theta\in(0,2\pi)$.}
\label{c4}
\end{equation}
\end{lemma}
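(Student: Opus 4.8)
The goal is to show that the operator-valued measure $\nu_1$, which a~priori may have a singular part, has absolutely continuous part with density exactly $w_1$ as defined by \eqref{c3}. The plan is to exploit the algebraic relation \eqref{c2}, which expresses the Poisson average $\int_0^{2\pi}P_r(\theta-t)\,d\nu_1(e^{it})$ as $\tfrac1{2i}(\psi_1(re^{i\theta})-\psi_1(re^{i\theta})^*)$. First I would recall that $\psi_1(z) = iG\frac{U_1+z}{U_1-z}G^*$ is the Cauchy transform of $\nu_1$, and combine this with the identity \eqref{a22}, i.e. $D_0(z)D_1(z) = -I$, to relate the boundary behaviour of $\psi_1$ to that of $\psi_0$. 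Concretely, from \eqref{a21} one has $\psi_1(z) = D_1(z)+\alpha = -D_0(z)^{-1}+\alpha$, so the existence of the nontangential limits $D_0^\pm$ together with the invertibility of these limits (both established in Lemma~\ref{lma.c1}) gives that $\psi_1$ has boundary values $\psi_1^\pm(e^{i\theta}) = -\bigl(D_0^\pm(e^{i\theta})\bigr)^{-1}+\alpha$ for a.e.\ $\theta$, in operator norm.

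The key step is then the following general fact about operator-valued measures of the form $\nu_1(\delta)=GE_{U_1}(\delta)G^*$: writing $F(z) = \psi_1(z) = i\int_0^{2\pi}\frac{e^{it}+z}{e^{it}-z}\,d\nu_1(e^{it})$ for $|z|<1$, the imaginary part $\tfrac1{2i}(F(re^{i\theta})-F(re^{i\theta})^*)$ equals $\int_0^{2\pi}P_r(\theta-t)\,d\nu_1(e^{it})$ by \eqref{c2}; as $r\to1$ this Poisson integral converges weakly (and here, because of the boundary values of $\psi_1$, in operator norm a.e.) to the density of the absolutely continuous part of $\nu_1$ at a.e.\ $\theta$, while the singular part of $\nu_1$ contributes nothing to the a.e.\ limit — this is the operator-valued analogue of the classical Fatou/de la Vall\'ee-Poussin theorem for the Poisson integral of a measure, applied entrywise to $(\nu_1(\cdot)\chi,\eta)$ for $\chi,\eta\in\calK$ and then upgraded using the uniform boundedness coming from $\norm{\nu_1(\bbT)}\leq1$. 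Since \eqref{c3} defines $w_1(e^{i\theta})$ to be precisely this limit, we conclude that $d\nu_1^\ac(e^{i\theta}) = w_1(e^{i\theta})\frac{d\theta}{2\pi}$. One should also check that $w_1\geq0$, which is immediate since each Poisson average $\int_0^{2\pi}P_r(\theta-t)\,d\nu_1(e^{it})$ is a non-negative operator (being an integral of the non-negative operator-valued measure $\nu_1$ against the non-negative kernel $P_r$), and non-negativity is preserved in the norm limit.

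The main obstacle is the point I flagged in the outline after \eqref{a18}: the Radon--Nikodym theorem genuinely fails for operator-valued measures in infinite dimensions, so one cannot simply invoke a decomposition $\nu_1 = \nu_1^\ac + \nu_1^\sing$ with $\nu_1^\ac$ having an $L^1$ density and be done. The way around this is that we do \emph{not} need a full Radon--Nikodym statement: we only need to identify the a.c.\ part, and for that the Poisson-integral characterisation suffices because it is governed by scalar Fatou theory applied to the (countably many, by separability, or all) matrix elements $(\nu_1(\cdot)\chi,\eta)$, each of which is an ordinary complex measure. The delicate part is to pass from a.e.\ convergence of each scalar Poisson integral to a.e.\ convergence of the operator-valued Poisson integral in a strong enough sense to legitimately call the limit the density of $\nu_1^\ac$; this is exactly where the norm-convergence supplied by Lemma~\ref{lma.c1} (via the identity $\psi_1 = -D_0^{-1}+\alpha$ and the invertibility of $D_0^\pm$) does the work, since it guarantees a single null set off which everything converges in operator norm. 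I expect the bookkeeping in this last passage — reconciling the scalar a.e.\ statements with the operator-norm statement and verifying that the singular part of $\nu_1$ is mutually singular with Lebesgue measure as a \emph{positive operator-valued} measure — to be the only real subtlety.
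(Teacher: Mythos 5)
Your proposal is correct and follows essentially the same route as the paper: reduce to the scalar measures $(\nu_1(\cdot)\chi_1,\chi_2)$, use the classical Fatou theorem to kill the Poisson integral of the singular part a.e., and use the operator-norm convergence of the full Poisson integral (supplied by \eqref{c2} and Lemma~\ref{lma.c1}) to identify the limit with $w_1$. The only cosmetic difference is that the paper makes the resulting identity \eqref{c4} precise by integrating the weak identity \eqref{c5} against test functions $f_1,f_2\in\calL$, whereas you phrase the same conclusion directly in terms of a common null set; both are adequate.
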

\begin{proof}
Of course, in the scalar case $\dim\calK<\infty$ formula \eqref{c4} follows
directly from \eqref{c3}; the point here is to consider the general case. 
Let $\chi_1,\chi_2\in\calK$; consider the scalar (complex-valued)
measure $(\nu_1(\cdot)\chi_1,\chi_2)$. 
If $\nu_1^\ac$ and $\nu_1^\sing$ 
are the a.c. and the singular parts of $\nu_1$ with respect to the Lebesgue
measure on $\bbT$, then 
$$
(\nu_1(\cdot)\chi_1,\chi_2)
=
(\nu_1^\ac(\cdot)\chi_1,\chi_2)
+
(\nu_1^\sing(\cdot)\chi_1,\chi_2)
$$
gives the unique decomposition of the scalar measure 
$(\nu_1(\cdot)\chi_1,\chi_2)$ into the a.c. and singular parts. 
By the scalar theory, we have
$$
\lim_{r\to1}\int_0^{2\pi} P_r(\theta-t)d(\nu_1^\sing(e^{it})\chi_1,\chi_2)=0
$$
for a.e.\  $\theta$. Thus, using \eqref{c3}, we obtain
\begin{equation}
(w_1(e^{i\theta})\chi_1,\chi_2)
= 
\lim_{r\to1} 
\int_0^{2\pi} P_r(\theta-t)d(\nu_1^\ac(e^{it})\chi_1,\chi_2).
\label{c5}
\end{equation}
Now take $f_1$, $f_2$ as in \eqref{b10}; multiplying \eqref{c5} 
by $(e^{i\theta}-z_1)^{-1}(e^{-i\theta}-\overline{z_2})^{-1}$ and integrating,
we get
$$
\int_0^{2\pi}
(w_1(e^{i\theta})f_1(e^{i\theta}), f_2(e^{i\theta}))\frac{d\theta}{2\pi}
=
\int_0^{2\pi}
d(\nu_1^\ac(e^{i\theta})f_1(e^{i\theta}),f_2(e^{i\theta})).
$$
By linearity, this extends to all $f_1,f_2\in\calL$. This yields \eqref{c4}.
\end{proof}

\subsection{The operators $Y_\pm$}\label{sec.c3}
Next, we consider the operators $Y_\pm$ of multiplication
by $D_0^\pm$, see \eqref{a26}.

\begin{lemma}\label{lma.c3}
The operators $Y_\pm$ are unitary maps from $L^2(\nu_0)$ 
to $L^2(\nu_1^\ac)$. 
\end{lemma}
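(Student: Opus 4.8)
The plan is to prove first that $Y_\pm$ is isometric and then that it is onto; unitarity then follows. Both parts hinge on the pointwise factorisation
$$
w_0(e^{i\theta})=D_0^\pm(e^{i\theta})^*\,w_1(e^{i\theta})\,D_0^\pm(e^{i\theta}),\qquad\text{a.e.\ }\theta\in(0,2\pi),
$$
which I would extract from \eqref{b9}. Taking $z_1=z_2=re^{i\theta}$ with $r\neq1$ turns the denominator in \eqref{b9} into $\abs{e^{it}-re^{i\theta}}^2$; multiplying through by $\abs{1-r^2}$ and using the elementary identity $\abs{1-r^2}\,\abs{e^{it}-re^{i\theta}}^{-2}=P_\rho(\theta-t)$, where $\rho=r$ if $r<1$ and $\rho=r^{-1}$ if $r>1$ (so $\rho<1$ and $P_\rho$ is the ordinary Poisson kernel), one obtains
$$
\int_0^{2\pi}P_\rho(\theta-t)\,d\nu_0(e^{it})=D_0(re^{i\theta})^*\Bigl(\int_0^{2\pi}P_\rho(\theta-t)\,d\nu_1(e^{it})\Bigr)D_0(re^{i\theta}).
$$
Now let $r\to1\pm0$. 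The left-hand side tends to $w_0(e^{i\theta})$ for a.e.\ $\theta$ by \eqref{c7}; the bracketed factor tends to $w_1(e^{i\theta})$ in the operator norm by \eqref{c3} (the singular part of $\nu_1$ contributing nothing, exactly as in the proof of Lemma~\ref{lma.c2}); and $D_0(re^{i\theta})\to D_0^\pm(e^{i\theta})$ in the operator norm by Lemma~\ref{lma.c1}. The uniform bound $\norm{\int_0^{2\pi}P_\rho(\theta-t)\,d\nu_1(e^{it})}\leq\norm{\nu_1(\bbT)}\leq1$ from \eqref{b3a} lets one pass to the limit in the product, and the factorisation follows. I expect this limiting step --- in particular keeping track of the singular part of $\nu_1$, which is precisely why the range of $Y_\pm$ must be taken in $L^2(\nu_1^\ac)$ rather than in $L^2(\nu_1)$ --- to be the main technical obstacle.

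Granting the factorisation, isometry is immediate. For any measurable $\calK$-valued function $h$ with $\norm{h}_{L^2(\nu_0)}<\infty$,
$$
\norm{D_0^\pm h}_{L^2(\nu_1^\ac)}^2=\int_0^{2\pi}\bigl(D_0^\pm(e^{i\theta})^*w_1(e^{i\theta})D_0^\pm(e^{i\theta})h,h\bigr)\frac{d\theta}{2\pi}=\int_0^{2\pi}(w_0(e^{i\theta})h,h)\frac{d\theta}{2\pi}=\norm{h}_{L^2(\nu_0)}^2 .
$$
In particular, for $f\in\calL$ the function $D_0^\pm f$ represents an element of $L^2(\nu_1^\ac)$, and $Y_\pm$ is isometric on the dense set $\calL$, hence extends to an isometry $L^2(\nu_0)\to L^2(\nu_1^\ac)$.

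For surjectivity I would imitate the end of the proof of Lemma~\ref{lma.b2}. Since $D_0^\pm(e^{i\theta})$ is boundedly invertible for a.e.\ $\theta$ (Lemma~\ref{lma.c1}) and $D_1^\pm=-(D_0^\pm)^{-1}$, the factorisation rearranges to $w_1(e^{i\theta})=D_1^\pm(e^{i\theta})^*w_0(e^{i\theta})D_1^\pm(e^{i\theta})$ a.e.; the isometry computation above, with the roles of $\nu_0$ and $\nu_1^\ac$ interchanged, then shows that multiplication by $D_1^\pm$ extends to an isometry $\wt Y_\pm\colon L^2(\nu_1^\ac)\to L^2(\nu_0)$. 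Fix $g\in\calL$. Then $\wt Y_\pm g$ is represented by $D_1^\pm g\in L^2(\nu_0)$, so there exist $f_n\in\calL$ with $f_n\to D_1^\pm g$ in $L^2(\nu_0)$; consequently $Y_\pm f_n\to Y_\pm\wt Y_\pm g$ in $L^2(\nu_1^\ac)$. On the other hand, taking boundary values in \eqref{a22} gives $D_0^\pm(e^{i\theta})D_1^\pm(e^{i\theta})=-I$ a.e., whence
$$
\norm{Y_\pm f_n+g}_{L^2(\nu_1^\ac)}=\norm{D_0^\pm(f_n-D_1^\pm g)}_{L^2(\nu_1^\ac)}=\norm{f_n-D_1^\pm g}_{L^2(\nu_0)}\to0 .
$$
Therefore $Y_\pm\wt Y_\pm g=-g$, and by density $Y_\pm\wt Y_\pm=-I$ on $L^2(\nu_1^\ac)$. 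Together with the isometry property, this proves that $Y_\pm$ is unitary.
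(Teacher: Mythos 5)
Your proof is correct and follows essentially the same route as the paper: the paper likewise sets $z_1=z_2=re^{i\theta}$ in \eqref{b9}, passes to the limit $r\to1\pm0$ using Lemmas~\ref{lma.c1} and \ref{lma.c2} to obtain the factorisation $w_0=(D_0^\pm)^*w_1D_0^\pm$ a.e., deduces isometry, and proves surjectivity by considering multiplication by the boundary values of $D_1$ together with \eqref{a22}. You simply supply more detail than the paper does, in particular the sign bookkeeping for $r>1$ and the density argument establishing $Y_\pm\wt Y_\pm=-I$.
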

\begin{proof}
Taking $z_1=z_2=re^{i\theta}$ in \eqref{b9}, we obtain
$$
\int_0^{2\pi}
P_r(\theta-t)d\nu_0(e^{it})
=
D_0(re^{i\theta})^*
\int_0^{2\pi} P_r(\theta-t)d\nu_1(e^{it}) \ 
D_0(re^{i\theta}).
$$
Taking $r\to1\pm0$ and using Lemma~\ref{lma.c2}, we get
$$
w_0(\mu)
=
D_0^\pm(\mu)^*w_1(\mu)D_0^\pm(\mu),
\quad
\text{ a.e.\  $\mu\in\bbT$.}
$$
This shows that $Y_\pm$ are isometries. 
Considering the operators of multiplication by the boundary values
of $D_1$ and using the identity \eqref{a22}, it is easy to prove that $Y_\pm$ 
are surjections, so they 
are unitary operators. 
\end{proof}

\section{The proof of Theorem~\ref{th.a1}}\label{sec.d}

\begin{proof}[Proof of Theorem~\ref{th.a1}]

The weight function $w_1$ has been defined by \eqref{c3}. 
By construction, it is non-negative. 
It is Borel measurable as a pointwise norm limit of continuous weight functions. 
Let us prove that 
the limits in \eqref{a1} exist in $\calK$ and 
\begin{equation}
(P^{(w_0)}_\pm f)(e^{i\theta})
=
\pm\frac{i}{2}((Xf)(e^{i\theta})-(Y_\pm f)(e^{i\theta}))
\label{d1}
\end{equation}
for a.e. $\theta$.
Take $f(\mu)=(\mu-z)^{-1}\chi$, $\chi\in\calK$, $\abs{z}\not=1$. 
We have
$$
D_0(z)=\alpha+i\int_0^{2\pi}d\nu_0(e^{it})\frac{e^{it}+z}{e^{it}-z}
=
\alpha
-
i\int_0^{2\pi}d\nu_0(e^{it})
+
2i
\int_0^{2\pi}d\nu_0(e^{it})\frac{e^{it}}{e^{it}-z},
$$
and therefore, by the definition \eqref{a24} of $X$, 
$$
(Xf)(e^{i\theta})
=
\left(\alpha
-
i\int_0^{2\pi}d\nu_0(e^{it})\right)f(e^{i\theta})
+
2i
\int_0^{2\pi}d\nu_0(e^{it})\frac{e^{it}}{(e^{i\theta}-z)(e^{it}-z)}\chi.
$$
For the second term in the above sum, we have
\begin{multline*}
\int_0^{2\pi}
d\nu_0(e^{it})\frac{e^{it}}{(e^{i\theta}-z)(e^{it}-z)}\chi
=
\int_0^{2\pi}
d\nu_0(e^{it})\frac{f(e^{i\theta})-f(e^{it})}{e^{it}-e^{i\theta}}e^{it}
\\
=
\lim_{r\to1}
\left\{
\int_0^{2\pi}
d\nu_0(e^{it})\frac{f(e^{i\theta})}{1-re^{i(\theta-t)}}
-
\int_0^{2\pi}
d\nu_0(e^{it})\frac{f(e^{it})}{1-re^{i(\theta-t)}}
\right\},
\end{multline*}
where the limits exist in the norm of $\calK$.
Putting this together, after a little algebra we get
\begin{equation}
(Xf)(e^{i\theta})
=
\lim_{r\to1}
\left\{D_0(re^{i\theta})f(e^{i\theta})
-2i
\int_0^{2\pi}
d\nu_0(e^{it})\frac{f(e^{it})}{1-re^{i(\theta-t)}}\right\}.
\label{d3}
\end{equation}
By Lemma~\ref{lma.c1} the limits
$$
\lim_{r\to\pm1}
D_0(re^{i\theta})f(e^{i\theta})
$$
exist in the norm of $\calK$. 
Thus, the limits of the integral in \eqref{d3} also exist. 
Recalling the definition \eqref{a1} of $P_\pm^{(w_0)}$, we obtain \eqref{d1}.
\end{proof}

\begin{proof}[Proof of \eqref{a9}]
By \eqref{c2} and \eqref{c7}, we have
\begin{multline*}
w_0(e^{i\theta})
=
\lim_{r\to1}\int_0^{2\pi} P_r(\theta-t)d\nu_0(e^{it})
\\
=
\frac1{2i}\lim_{r\to1} (\psi_0(re^{i\theta})-\psi_0(\tfrac1r e^{i\theta}))
=
\frac1{2i}\lim_{r\to1} (D_0(re^{i\theta})-D_0(\tfrac1r e^{i\theta})).
\end{multline*}
Thus, if we denote by $Y_0$ the operator of multiplication 
by $w_0(e^{i\theta})$, acting from $L^2(\nu_0)$ to $L^2(\nu_1^\ac)$, 
we obtain 
$$
Y_0=\frac1{2i}(Y_+-Y_-), 
$$
and therefore $\norm{Y_0}\leq 1$. This yields
$$
\int_0^{2\pi} (w_1(e^{i\theta})w_0(e^{i\theta})f(e^{i\theta}), w_0(e^{i\theta})f(e^{i\theta}))\frac{d\theta}{2\pi}
\leq
\int_0^{2\pi} (w_0(e^{i\theta})f(e^{i\theta}),f(e^{i\theta}))\frac{d\theta}{2\pi}, 
$$
which implies \eqref{a9}.
\end{proof}

\begin{proof}[Proof of \eqref{a10}]
Suppose $p=1$. Then 
$$
\Tr (GG^*)
=
\int_0^{2\pi} \Tr( w_0(e^{i\theta}))\frac{d\theta}{2\pi}
\leq 1, 
$$
hence $G$ is Hilbert-Schmidt. 
Then 
$$
\int_0^{2\pi} \norm{w_1(e^{i\theta})}_1\frac{d\theta}{2\pi}
=
\int_0^{2\pi} \Tr(w_1(e^{i\theta}))\frac{d\theta}{2\pi}
=
\Tr
(\nu_1^\ac (\bbT))
\leq
\Tr(\nu_1(\bbT))
=
\Tr(GG^*)\leq 1,
$$
i.e. $\norm{w_1(\cdot)}_1\in L^1(\bbT)$. 
\end{proof}


\begin{thebibliography}{12}


\bibitem{deB}
{\sc L.~de Branges,}
\emph{Perturbations of self-adjoint transformation,}
American Journal of Mathematics, 
\textbf{84}, no.~4 (1962), 543--560.


\bibitem{Bu}
{\sc A.~V.~Buhvalov,} 
\emph{Hardy spaces of vector-valued functions.} (Russian) 
Investigations on linear operators and theory of functions, VII. 
Zap. Nau\v{c}n. Sem. Leningrad. Otdel. Mat. Inst. Steklov. (LOMI) \textbf{65} (1976), 5--16.


\bibitem{Koosis1}
{\sc P.~Koosis,} 
\emph{Moyennes quadratiques pond\'er\'ees de fonctions p\'eriodiquess 
et de leurs conjugu\'ees harmoniques,} 
C. R. Acad. Sci. Paris, Ser. A, \textbf{291} (1980), 255--257.



\bibitem{Koosis2}
{\sc P.~Koosis,}
\emph{Introduction to $H_p$ spaces.} Second edition.
Cambridge University Press, Cambridge, 1998.


\bibitem{Ku}
{\sc S.~T.~Kuroda,}
\emph{On a stationary approach to scattering problem,}
Bull. Amer. Math. Soc. \textbf{70} (1964), 556--560. 


\bibitem{L}{\sc M.~Lacey}, 
\emph{Two Weight Inequality for the Hilbert Transform: A Real Variable Characterization, II}, 
preprint, arXiv1301.4663v3.



\bibitem{LSSUT}
{\sc M.~Lacey, E.~Sawyer, C.-Y.~Shen, I.~Uriarte-Tuero}, \emph{Two weight
 inequality for the Hilbert transform: a real variable characterization}, preprint, 
 arXiv 1201.4319v6.


\bibitem{Muck}
{\sc B.~Muckenhoupt}, \emph{Weighted norm inequalities for the Hardy maximal function,} 
Trans. AMS \textbf{165}   (1972), 207--226.


\bibitem{NTV}
{\sc F.~Nazarov,  S.~Treil, A.~Volberg,}
\emph{Two weight estimate for the Hilbert transform and Corona decomposition for non-doubling measures}, 
preprint,  arXiv:1003.1596.

\bibitem{NVY}
{\sc F.~Nazarov, A.~Volberg, P.~Yuditskii,}
\emph{Asymptotics of orthogonal polynomials via the Koosis' theorem,}
Math. Res. Lett. \textbf{13}, no.~5--6 (2006), 975--983. 

\bibitem{PuV}
{\sc A.~Pushnitski, A.~Volberg,}
\emph{Scattering theory and Banach space valued singular integrals,}
preprint, arXiv:1211.6694.

\bibitem{deF}
{\sc J.~Rubio de Francia,}
\emph{Martingale and integral transforms of Banach space valued functions.} 
Probability and Banach spaces (Zaragoza, 1985), 195--222, 
Lecture Notes in Math., 1221, Springer, Berlin, 1986. 

\bibitem{Yafaev}
{\sc D.~R.~Yafaev,}
\emph{Mathematical scattering theory. General theory.}
American Mathematical Society, Providence, RI, 1992. 

\bibitem{Vo}
{\sc A.~Volberg,}\emph{Calder\'on--Zygmund capacities and operators on non-homogeneous spaces},
CBMS Series in Math., v. 100, Amer. Math. Soc., 2003, pp. 1--165.


\end{thebibliography}
\end{document}